
\documentclass[12pt, A4]{article}
\usepackage{amssymb}
\usepackage{amsmath,amsfonts,amsthm,amstext}
\usepackage[english]{babel}
\usepackage[latin1]{inputenc}
\usepackage{makeidx}
\usepackage{amscd}
\usepackage[dvips]{graphicx}
\usepackage{fancybox}
\usepackage{niceframe}
\usepackage{times}
\usepackage{mathrsfs}
\usepackage{bbm}
\usepackage[all]{xy}
\usepackage{fancyhdr}
\usepackage{sectsty}
\usepackage[Conny]{fncychap}
\usepackage[symbol]{footmisc}
\usepackage[T1]{fontenc}
\DefineFNsymbols{stars}[text]{* {**} {*} {\S} {\I}}
\setfnsymbol{stars}

\newtheorem{thm}{Theorem}[section]
\newtheorem{cor}[thm]{Corollary}
\newtheorem{pro}[thm]{Proposition}
\newtheorem{deff}[thm]{Definition}
\newtheorem{lem}[thm]{Lemma}
\newtheorem{rem}[thm]{Remark}

\newcommand{\nc}{\newcommand}

\nc{\cc}{\D{C}} \nc{\hh}{\D{H}} \nc{\nn}{\D{N}} \nc{\oo}{\D{O}}
\nc{\qq}{\D{Q}}
 \nc{\rr}{\D{R}}
\nc{\zz}{\D{Z}} \nc{\livre}{\ast}

\def\G{{\cal G}}
\def\Vightarrow#1{\smash{\mathop{\longrightarrow}\limits^{#1}}}

\nc{\barr}{\begin{array}} \nc{\earr}{\end{array}}
\nc{\bthm}{\begin{thm}} \nc{\ethm}{\end{thm}}
\nc{\bpro}{\begin{pro}} \nc{\epro}{\end{pro}}
\nc{\blem}{\begin{lem}} \nc{\elem}{\end{lem}}
\nc{\bins}{\begin{ins}} \nc{\eins}{\end{ins}}
\nc{\bcor}{\begin{cor}} \nc{\ecor}{\end{cor}}
\nc{\brem}{\begin{rem}} \nc{\erem}{\end{rem}}
\nc{\bdeff}{\begin{deff}} \nc{\edeff}{\end{deff}}
\newtheorem*{theorem*}{Theorem}
\nc{\bea}{\begin{eqnarray}} \nc{\eea}{\end{eqnarray}}
\nc{\D}[1]{{\mathbb#1}}
\def\R{\rm I\kern -.2em R}
\def\N{\rm I\kern -.18em N}
\def\Z{\rm Z\kern -.332em Z}
\def\de{\rm [\kern -.15em [}
\def\dd{\rm ]\kern -.15em ]}
\def\||{\hspace{0.15cm}|\hspace{0.15cm}}

\def\dbigcup{\mathinner{\bigcup \mkern -13.2mu \rlap{\raise 0.6ex\hbox{.}}\mkern 14.9mu}}
\title{Hyperbolic 3-Manifolds  Groups are Subgroup Conjugacy Separable}
\author{S. C. Chagas,
 \,\,\, P. A. Zalesskii
 \footnote{\vspace*{-.5cm} Both authors were supported by
CNPq.}}
\begin{document}






\maketitle

\begin{abstract}
 A group $G$ is called subgroup conjugacy separable
 if for
every pair of non-conjugate finitely generated subgroups  of $G$,
 there exists a finite quotient of $G$ where
the images of these subgroups are not conjugate.
  It is proved
that  the fundamental group of  a hyperbolic  3-manifold (closed or with cusps) is subgroup conjugacy separable.
\end{abstract}

\section{Introduction}

O. Bogopolski and F. Grunewald \cite{BG}  introduced an
important notion of subgroup conjugacy separability for a group
$G$. A group $G$ is said to be subgroup conjugacy separable if for
every pair of non-conjugate finitely generated subgroups $H$ and
$K$ of $G$,
 there exists a finite quotient of $G$ where
the images of these subgroups are not conjugate.  Thus the subgroup conjugacy separability  is a residual property of groups, which logically continues the following classical  residual  properties of groups: the residual finiteness, the conjugacy separability, and the subgroup separability (LERF).
A.I. Mal'cev was the first, who noticed, that finitely presented residually finite (resp. conjugacy separable) groups have solvable word problem (resp. conjugacy problem) \cite{M-58}. Arguing in a similar way, one can show that finitely presented subgroup separable  groups have solvable membership problem and that finitely presented subgroup conjugacy separable groups have solvable conjugacy problem for finitely generated subgroups. The last means, that there is an algorithm, which given a finitely presented subgroup conjugacy separable group $G = \langle X \mid R\rangle$ and two finite sets of elements $Y$  and $Z$, decides whether the subgroups $\langle Y\rangle$ and $\langle Z\rangle$ are conjugate in $G$.

Bogopolski and Grunewald proved that
free groups and the fundamental groups of finite trees of finite
groups subject to a certain normalizer condition, are subgroup
conjugacy separable.  For finitely generated virtually free groups the result was proved in  \cite{CZ-15}. Also, O. Bogopolski and K-U. Bux in \cite{BB}
proved that surface groups are conjugacy subgroup separable.
In \cite{CZ-16} the authors of the present paper extended this result to
 limit groups.

The objective of this paper is to show that the fundamental group of a hyperbolic 3-manifold is subgroup conjugacy separable. Bogopolski and Bux posed it as an open question on page 3 in \cite{BB-16}.

\begin{thm}  \label{3-manifolds} The fundamental group $\pi_1M$ of a  hyperbolic 3-manifold $M$ (closed or with cusps) is subgroup conjugacy separable.
\end{thm}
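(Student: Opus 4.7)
The natural plan is to reformulate subgroup conjugacy separability as a statement about the profinite completion $\widehat{G}$ of $G=\pi_1 M$. Since $G$ is residually finite, two finitely generated subgroups $H,K\le G$ have conjugate images in every finite quotient of $G$ if and only if their closures $\overline{H}$ and $\overline{K}$ in $\widehat{G}$ are conjugate by an element of $\widehat{G}$. Hence it suffices to show that whenever $g\overline{K}g^{-1}=\overline{H}$ for some $g\in \widehat{G}$, there exists $\gamma\in G$ with $\gamma K \gamma^{-1}=H$.

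The ingredients I would assemble are: (i) the Agol--Wise theorem, according to which $G$ is virtually compact special and hence LERF by Haglund--Wise, so that the natural map $\widehat{H}\to\widehat{G}$ is injective and $\overline{H}\cong\widehat{H}$ for every finitely generated $H\le G$; (ii) the conjugacy separability of $\pi_1 M$ proved by Hamilton--Wilton--Zalesskii; (iii) the geometric dichotomy for finitely generated subgroups of a Kleinian group, provided by the tameness theorem (Agol, Calegari--Gabai) together with Canary's covering theorem, which asserts that $H$ is either relatively quasiconvex (``geometrically finite'') or a virtual fibre surface subgroup; (iv) in the cusped case, the profinite rigidity of the peripheral structure developed by Wilton and Zalesskii, which permits a uniform treatment via the relatively hyperbolic structure with virtually abelian cusps and allows one to arrange the profinite conjugator so as to respect the peripheral decomposition.

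In the geometrically finite case I would exploit the specialness of $G$ to embed a finite index subgroup in a right-angled Artin group, where quasiconvex subgroups are tightly controlled, and combine the quasiconvex separability theorems of Haglund--Wise and Minasyan with the conjugacy separability of $G$. For a virtual fibre subgroup, the fibre is normal in a finite index subgroup, this normality is detected profinitely, and the argument reduces to the essentially trivial subgroup conjugacy separability of the cyclic base. The principal obstacle, as in earlier papers of the authors, is the last step of promoting a conjugator in $\widehat{G}$ to one in $G$: LERF gives subgroup-membership separability and conjugacy separability gives element-conjugacy separability, but subgroup conjugacy sits between these and needs a dedicated argument. The bridge is to study the coset $\{g\in\widehat{G}:g\overline{K}g^{-1}=\overline{H}\}$ as a translate of the normalizer $N_{\widehat{G}}(\overline{H})$ and to prove, using the action on the profinite Bass--Serre tree associated with the peripheral splitting (in the cusped case) or with the cube-complex structure arising from Agol--Wise (in the closed case), that this coset has nonempty intersection with $G$; this is where the specific features of hyperbolic $3$-manifold groups are essential.
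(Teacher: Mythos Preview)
Your route via the tameness dichotomy is genuinely different from the paper's. The paper never splits into geometrically finite versus virtual fibre; instead it introduces the notion of an \emph{adjustable} group---one in which any profinite conjugacy $\overline{A}^{\gamma}=\overline{B}$ can be corrected by some $\beta\in\overline B$ so that $A^{\gamma\beta}\cap B\neq 1$---and proves that subgroup-separable groups admitting a hierarchy are adjustable. The decisive step (Lemma~\ref{elementsconjugate}) is then: once $a^{\gamma\beta}\in B$ for some $1\neq a\in A$, conjugacy separability lets one replace $\gamma\beta$ by an element of $C_{\widehat G}(a)$, and if $[C_G(a):\langle a\rangle]<\infty$ then $C_{\widehat G}(a)\subseteq\overline{\langle a\rangle}\,C_G(a)\subseteq \overline A\cdot G$, so the conjugator lands in $G$. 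In the closed case every nontrivial $a$ has virtually cyclic centraliser; in the cusped case relative hyperbolicity supplies such an $a$ unless $A$ sits inside a peripheral $\mathbb{Z}^2$, which is disposed of directly. No structural information about $A$ beyond this one element is used, and in particular tameness plays no role.

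Your outline is plausible but leaves the hard step open. The virtual-fibre case does reduce to a normality argument as you suggest: if $H\triangleleft G_0$ with $G_0/H\cong\mathbb{Z}$ and $G_0$ normal of finite index, then LERF forces any $K$ with $\overline K$ conjugate to $\overline H$ to be normal in $G_0$ as well, and after absorbing the $G$-part of the conjugator the remaining $\widehat{G_0}$-part normalises $\overline K$, giving $\overline H=\overline K$ and hence $H=K$. But the geometrically finite case is exactly where your ``bridge'' is needed, and your final paragraph names the obstacle without resolving it: analysing the coset $N_{\widehat G}(\overline H)\cdot g$ via a profinite Bass--Serre or cubical action is not visibly easier than the original problem, and for cusped $M$ the group is only relatively hyperbolic, so the quasiconvex subgroup-conjugacy results you invoke (which are stated for word-hyperbolic virtually special groups) do not apply as written. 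The paper's reduction to the centraliser of a \emph{single element} is precisely the missing idea; it replaces your normaliser-coset analysis by a virtually procyclic one, yields a uniform argument independent of whether $H$ is quasiconvex, and as a bonus extends verbatim to all hyperbolic subgroup-separable virtually special groups (Theorem~\ref{SSHVS}).
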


 Note that the fundamental group $\pi_1M$ of a  hyperbolic 3-manifold $M$  is subgroup  separable (see  \cite[Corollary 5.5 (1)]{AFW-15}), a crucial property used   in the proof.  In fact,  the proof is valid  for  hyperbolic subgroup separable virtually special groups.
 
 \begin{thm} \label{SSHVS} A  hyperbolic subgroup separable virtually special group $G$ is infinite subgroup conjugacy separable\end{thm}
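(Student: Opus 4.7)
The plan is to pass to the profinite completion $\widehat G$ of $G$ and reformulate the conclusion in profinite terms: given two finitely generated infinite subgroups $H,K$ of $G$ that are non-conjugate in $G$, we must show their closures $\overline H, \overline K$ in $\widehat G$ are non-conjugate. Since $G$ is subgroup separable, the natural map $\widehat H \to \widehat G$ is an embedding identifying $\widehat H$ with $\overline H$, and similarly for $K$, so one is really comparing $\widehat H$ and $\widehat K$ inside $\widehat G$. Suppose, for contradiction, that some $g \in \widehat G$ satisfies $g\overline K g^{-1} = \overline H$; the goal is to upgrade $g$ to a conjugator in $G$.

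The next step uses hyperbolicity to locate a rigid witness. Since $H$ is infinite and $G$ is hyperbolic, $H$ contains an element $h$ of infinite order, whose centralizer in $G$ is virtually cyclic and contained in a maximal elementary subgroup $E_G(h)$; this latter subgroup is almost malnormal in $G$. By Minasyan's theorem, hyperbolic virtually special groups are hereditarily conjugacy separable, so the $G$-conjugacy class of $h$ is closed in $\widehat G$. Applying this to $ghg^{-1} \in \overline K$ together with separability of $K$ in $G$, one extracts a genuine $g_1 \in G$ with $g_1 h g_1^{-1} \in K$. Replacing $K$ by $g_1^{-1}Kg_1$, we may assume $h \in H \cap K$, and the profinite conjugator $g$ now normalizes $\overline{\langle h\rangle}$, hence lies in the closure of $E_G(h)$ up to a small coset.

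We then iterate this reduction on a finite generating set of $H$. At each stage one uses element conjugacy separability together with separability of appropriate double cosets (both consequences of the virtually special hypothesis via Haglund--Wise and Minasyan) to convert a profinite conjugation into an actual conjugation in $G$, while tightening the location of the residual profinite conjugator within closures of ever smaller, almost malnormal, quasiconvex subgroups. After finitely many steps the conjugator is forced to lie in $G$ itself, producing a $g_0 \in G$ with $g_0 K g_0^{-1} = H$ and contradicting the non-conjugacy hypothesis.

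The main obstacle is controlling where the profinite conjugator sits through the iteration. This is the step where both hypotheses are essential: virtual specialness supplies the hereditary conjugacy separability, the separability of double cosets, and the virtual retraction property of finitely generated quasiconvex subgroups, while hyperbolicity supplies almost malnormality of maximal elementary subgroups and the presence of infinite-order elements driving the induction. The restriction to \emph{infinite} subgroup conjugacy separability is natural, as it is precisely the existence of an infinite-order element in $H$ that initiates the descent.
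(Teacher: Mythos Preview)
There is a genuine gap at the step where you claim: ``Applying this to $ghg^{-1} \in \overline K$ together with separability of $K$ in $G$, one extracts a genuine $g_1 \in G$ with $g_1 h g_1^{-1} \in K$.'' Conjugacy separability says that two elements \emph{of $G$} conjugate in $\widehat G$ are already conjugate in $G$; but $ghg^{-1}$ is an element of $\overline K \subseteq \widehat G$, not of $G$. Separability of $K$ only gives $\overline K \cap G = K$, and there is no reason for $ghg^{-1}$ to land in $G$. What you actually need is that $h^{\widehat G} \cap \overline K \neq \emptyset$ implies $h^G \cap K \neq \emptyset$, and this does not follow from conjugacy separability plus subgroup separability. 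Your later appeal to double coset separability does not rescue this: the known separability results (Haglund--Wise, Minasyan) apply to \emph{quasiconvex} subgroups, whereas the whole point of the theorem is to treat arbitrary finitely generated $H,K$, which need not be quasiconvex.

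The paper handles exactly this point by introducing the notion of an \emph{adjustable} group: one in which, whenever $\overline A^{\gamma}=\overline B$, there is $\beta\in\overline B$ with $A^{\gamma\beta}\cap B\neq 1$. Adjustability is established for $G$ not via quasiconvexity but via its quasiconvex hierarchy, using profinite Bass--Serre theory (Proposition~\ref{fundamental adjustable} and Proposition~\ref{adjustable}). Once adjustability gives an element $a\in A$ with $a^{\gamma\beta}\in B\subset G$, conjugacy separability legitimately applies, and the virtually cyclic centralizer of $a$ (together with $\overline{C_G(a)}=C_{\widehat G}(a)$ from hereditary conjugacy separability) finishes the argument in a single stroke via Lemma~\ref{elementsconjugate}. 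In particular, no iteration over a generating set is needed; your proposed inductive descent through ``ever smaller almost malnormal quasiconvex subgroups'' is both vague and unnecessary once the adjustability step is in place.
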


 These theorems were known before only for quasiconvex subgroups. In  \cite[Theorem 1.2]{CZ-16},   the authors of this paper proved that quasiconvex subgroups of hyperbolic virtually special groups  are subgroup conjugacy separable (Bogopolsky and Bux  gave an independent proof of this result under the complementary torsion freeness assumption).  Note also that Theorem \ref{SSHVS} valid also for finite  soluble  subgroups of $G$ (see Theorem \ref{HVS}).

 Virtually special groups own its importance  due  to  Daniel
Wise who proved in \cite{W} that 1-relator groups with torsion are
virtually special, answering positively a question of Gilbert
Baumslag who asked in \cite{B-67} whether these groups are
residually finite.
In fact, many  groups of geometric origin are
virtually special: the fundamental group of a hyperbolic
3-manifold (Agol \cite{A-13}), small cancellation groups
(a combination of \cite{W} and \cite{A-13}) and hyperbolic Coxeter
groups (Haglund and Wise \cite{HW-2010}). 
 

\subsection*{Acknowledgments}

The authors would like to express their deep gratitude to Ashot Minasyan for his comments and suggestions that  led to  substantial  improvement of the paper.

\section{Preliminaries}\label{pelim} In this section we introduce the reader to conceptions and terminology of the profinite version of the Bass-Serre theory of  groups acting on trees used in the paper.

We consider the following standard definitions.
 Our graphs are oriented  graphs. A graph $\Gamma$ is a set together with
a distinguished subset of {\it vertices} $V= V(\Gamma)$ and together
with two maps $d_0, d_1: \Gamma\longrightarrow V$, which are the
identity when restricted to $V$. This graph is called {\it
profinite} if $\Gamma$ is a profinite  space (i.e., a compact,
Hausdorff and totally-disconnected topological space), $V$ is a
closed subset of $\Gamma$, and the mappings $d_i$ are continuous.
If $e\in \Gamma$, we say that $d_0(e)$ and $d_1(e)$ are the origin
and terminal vertex of $e$, respectively.  The complement $E=
E(\Gamma)= \Gamma-V(\Gamma)$ of $V(\Gamma)$ in $\Gamma$ is called
the set (space) of {\it edges} of $\Gamma$. For basic concepts
such as connectedness, or of when a graph is a tree, see
\cite[Chapter I]{DD-89}, or   \cite[Part I]{Serre},   for
abstract graphs. We assume that the reader is familiar with basic notions of Bass-Serre theory of groups acting on trees treated in these books. 

We also assume that the reader knows basic facts about profinite groups, in particular the notion of the profinite topology on a group that can be found in \cite[Chapter 3]{RZ-10}. Following the tradition of combinatorial group theory  a subgroup  $H$ of a group $G$ will be called {\it separable}  if it is closed in the profinite topology of $G$.

For a profinite space $X$  that is the inverse limit of finite discrete spaces $X_j$, $[[\widehat{\mathbb{Z}} X]]$ is the inverse limit  of $\widehat 
{[\mathbb{Z}}X_j]$, where $[\widehat{\mathbb{Z}} X_j]$ is the free $\widehat{\mathbb{Z}}$-module  with basis $X_j$. For a pointed profinite space $(X, *)$
that is the inverse limit of pointed finite discrete spaces $(X_j, *)$, $[[\widehat{\mathbb{Z}} (X, *)]]$ is the inverse limit  of $
[\widehat{\mathbb{Z}} (X_j, *)]$, where $[\widehat{\mathbb{Z}} (X_j, *)]$ is the free $\widehat{\mathbb{Z}}$-module  with basis
$X_j \setminus \{ * \}$ \cite[Chapter~5.2]{RZ-10}.

Given a profinite graph $\Gamma$ define the pointed space $(E^*(\Gamma), *)$ as  $\Gamma / V(\Gamma)$ with the image of $V(\Gamma)$ as a distinguished point $*$.
By definition  a profinite tree  $\Gamma$ is a profinite graph with a short exact sequence
$$
0 \to [[\widehat{\mathbb{Z}}(E^*(\Gamma), *)]] \Vightarrow\delta [[\widehat{\mathbb{Z}} V(\Gamma)]] \Vightarrow{\epsilon} \widehat{\mathbb{Z}} \to 0
,$$
where $\delta(\bar{e}) = d_1(e) - d_0(e)$ for every $e \in E(\Gamma)$, $\bar{e}$ the image of $e$ in $E^*(\Gamma)$ and $\epsilon(v) = 1$ for
every $v \in V(\Gamma)$.

We refer for further details of the profinite version of the Bass-Serre theory to  \cite{ZM}.
 If $v$  and $w$ are vertices of a tree  (respectively, of a profinite tree)
 $\Gamma$, we denote by $[v,w]$ the smallest subtree  (respectively, a
 profinite subtree) of $\Gamma$ containing $v$ and $w$.

 A group $H$ is said to act on a graph $\Gamma$ if it acts on $\Gamma$ as a set and if in
 addition $d_i(hm)= hd_i(m)$, for all $h\in H$  and $m\in \Gamma$  ($i=0,1$); if $\Gamma$
 is a profinite graph and $H$ a profinite group, we assume that the action is continuous.
 The quotient $ \Gamma/H$  inherits a natural graph structure (respectively, profinite graph structure).
 
\medskip 

Let $G_1$ and $G_2$ be profinite groups with a common closed
subgroup $D$; the {\it profinite free amalgamated product}
$G_1\amalg_HG_2$ is  the push-out $G$ of $G_1$ and $G_2$ over $H$
in the category of profinite groups; if the canonical
homomorphisms $G_1\longrightarrow G$ and $G_2\longrightarrow G$
are embeddings, one says that $G$ is proper (see 
\cite[Chapter 9]{RZ-10} for more details). Note that if $G$ is residually finite then $G$ is proper.

Let $H$ be a profinite group and let $f:A\longrightarrow B$ be a continuous
isomorphism between closed subgroups $A,B$ of $H$. A  {\it profinite
HNN-extension} of $H$  with associated
subgroups $A,B$ consists of a profinite group
$G={\rm HNN}(H,A,t)$, an element $t\in
G$, and a continuous homomorphism
$\varphi:H\longrightarrow G$ with $t(\varphi (a))t^{-1}= \varphi f(a)$  and satisfying the following universal
property: for any profinite group $K$, any $k\in K$ and any continuous homomorphism
$\psi:H\longrightarrow K$ satisfying $k(\psi(a))k^{-1}=\psi f(a)$ for all
$a\in A$, there is a unique continuous homomorphism
$\omega:G\longrightarrow K$ with
$\omega(t)=k$ such that  the diagram
 $$\xymatrix{G\ar@{.>}[dr]^{\omega}&\\
                              H\ar[u]^{\varphi}\ar[r]^{\psi}&K}$$ is commutative.
We shall refer to $\omega$ as the homomorphism induced by $\psi$.

\medskip Observe that one needs to test the above universal property only for
finite  groups $K$, for then it holds automatically for any profinite group $K$,  since
$K$ is an inverse limit of finite groups.

We define the standard tree $S(G)$ on which $G$ acts  (respectively, $S(\widehat{G})$ on which the profinite completion $\widehat{G}$
  acts)  for the cases of an amalgamated free product $G=G_1\ast_HG_2$ (respectively, $\widehat G=\widehat G_1\amalg_{\widehat{H}}\widehat{G}_2$)
  and an HNN-extension $G=HNN(G_1, H,t)$ (respectively, $\widehat G=HNN(\widehat G_1,\widehat H,t)$) since we shall use them frequently  for these cases.

  \begin{itemize}
  \item Let $G=G_1\ast_HG_2$. Then
 the vertex set is $V(S(G))= \displaystyle G/G_1\cup G/G_2$,
  the edge set is $E(S(G))= G/H$, and
  the initial and terminal vertices of an edge $gH$ are
  respectively  $gG_1$ and $gG_2$.
   \item Similarly, let $\widehat G=\widehat G_1\amalg_{\widehat{H}}\widehat{G}_2$. Then the vertex set is $V(S(\widehat{G}))=
  \displaystyle \widehat G/\widehat G_1 \cup\widehat{G}/\widehat{G}_2$,
  the edge set is $E(S(\widehat{G}))= \widehat{G}/\widehat{H}$, and
  the initial and terminal vertices of an edge $ g\widehat{H}$ are
  respectively  $g\widehat{G}_1$ and $g\widehat{G}_2$.
    \item Let $G=HNN(G_1, H,t)$. Then
 the vertex set is $V(S(G))= \displaystyle G/G_1$,
  the edge set is $E(S(G))= G/H$, and
  the initial and terminal vertices of an edge $gH$ are
  respectively  $gG_1$ and $gtG_1$.
  \item Similarly let $\widehat G=HNN(\widehat G_1, \widehat H,t)$. Then
 the vertex set is $V(S(\widehat G))= \displaystyle \widehat G/\widehat G_1$,
  the edge set is $E(S(\widehat G))= \widehat G/\widehat H$, and
  the initial and terminal vertices of an edge $g\widehat H$ are
  respectively  $g\widehat G_1$ and $gt\widehat G_1$.
  \end{itemize}
 
 The tree $S(G)$
  naturally embeds in $S(\widehat G)$ if and only if the subgroups $H$, $G_1$ and $G_2$   are separable in  $G$, or equivalently $H$ is closed in $G_1$  (and in $G_2$ in the case of amalgamation)  with
respect to the topology induced by the profinite topology on $G$ (see 
\cite[Proposition 2.5]{CB-13}).

These constructions  are particular cases of the general construction of the profinite  fundamental group of a finite graph of profinite groups.  
 
 When we say that ${\cal G}$ is a finite graph of profinite groups we mean that it contains the data of the
underlying finite graph, the edge profinite groups, the vertex profinite groups and the attaching continuous maps. More precisely,
let $\Delta$ be a connected finite graph. A    graph of profinite groups $({\cal G},\Delta)$ over
$\Delta$ consists of a specifying profinite group ${\cal G}(m)$ for each $m\in \Delta$, and continuous monomorphisms
$\partial_i: {\cal G}(e)\longrightarrow {\cal G}(d_i(e))$ for each edge $e\in E(\Delta)$. The  fundamental group
$$\Pi= \Pi_1({\cal G},\Delta)$$
of the graph of profinite groups $({\cal G},\Delta)$ is defined by means of a universal property: $\Pi$ is a profinite group together
with the following data  and conditions:
\begin{enumerate}
\item [(i)] a maximal subtree $T$ of $\Delta$;

\smallskip
\item [(ii)]  a collection of continuous homomorphisms
$$\nu_m: {\cal G}(m)\longrightarrow \Pi\quad (m\in \Delta), $$
  and     a continuous  map
   $E(\Delta) \longrightarrow  \Pi$, denoted $e\mapsto t_e$  ($e\in E(\Delta)$), such that
$t_e=1$, if $e\in E(T)$, and
$$(\nu_{d_0 (e)}\partial_0)(x)= t_e(\nu_{d_1 (e)}\partial_1)(x)t_e^{-1},\quad  \forall x\in {\cal G}(e), \ e\in E(\Delta); $$

\smallskip
\item [(iii)]  the following universal property is satisfied:

\medskip
\noindent whenever one has the following data

\begin{itemize}
\item $H$ is a profinite group,\\
\item $\beta_m: {\cal G}(m)\longrightarrow \Pi\quad (m\in \Delta)$
a collection of continuous homomorphisms,\\
\item a map $e\mapsto s_e$ ($e\in E(\Delta)$)  with $s_e=1$, if
$e\in E(T)$, and\\
\item $(\beta_{d_0 (e)}\partial_0)(x)= s_e(\beta_{d_1
(e)}\partial_1)(x)s_e^{-1}, \forall x\in {\cal G}(e), \ e\in
E(\Delta),  $\\
\smallskip
\noindent then there exists a unique continuous homomorphism $\delta : \Pi\longrightarrow  H$ such that $\delta(t_e)= s_e$
 $(e\in E(\Delta))$, and for each $m\in\Delta$ the diagram
\end{itemize}

\medskip

$$\xymatrix{&
\Pi  \ar[dd]^\delta   \\  {\cal G}(m)  \ar[ru]^{\nu_m}
\ar[rd]_{\beta_m }\\ &H }$$

\medskip
\noindent commutes.
\end{enumerate}

In \cite[paragraph (3.3)]{ZM},  the fundamental group
 $\Pi$ is  defined explicitly in terms of generators and relations.  It is also proved there
that the definition given above is independent of the choice of
the maximal subtree $T$.
We use the notation $\Pi(m) = {\rm Im}(\nu_m)$; so $\Pi(m)\cong {\cal G}(m)$, for $m\in \Delta$.

Associated with the graph of groups $({\cal G}, \Delta)$ there is
a corresponding  {\it standard profinite graph} (or universal covering graph)
  $S=S(\Pi)=\dbigcup
\Pi/\Pi(m)$.  The vertices of
$S$ are those cosets of the form
$g\Pi(v)$, with $v\in V(\Delta)$
and $g\in \Pi$; the incidence maps of $S$ are given by the formulas:

$$d_0 (g\Pi(e))= g\Pi(d_0(e)); \quad  d_1(g\Pi(e))=gt_e\Pi(d_1(e)) \,  (\,e\in E(\Delta)).  $$

 In fact $S$  is a profinite tree (cf. \cite[Theorem 3.8]{ZM}.
 There is a natural  action of
 $\Pi$ on $S$, and clearly $ S/\Pi= \Delta$.
 
 \begin{rem}\label{completion}
 If $\pi_1(\G,\Gamma)$ is the fundamental group of a finite graph of groups then one has the induced graph of profinite completions of edge  and vertex groups $(\widehat\G,\Gamma)$ and  a natural homomorphism $\pi=\pi_1(\G,\Gamma)\longrightarrow \Pi_1(\widehat\G,\Gamma)$. It is an embedding if $\pi_1(\G,\Gamma)$ is residually finite. In this case $\Pi_1(\widehat \G,\Gamma)=\widehat{\pi_1(\G,\Gamma)}$ is simply the profinite completion. 
Moreover, 
 
 \begin{enumerate}
 
 \item[(i)]  The tree $S(\pi)$
  naturally embeds in $S(\widehat\pi)$ if and only if  the edge and vertex groups $\G(e)$, $\G(v)$     are separable in  $\pi_1(\G,\Gamma)$, or equivalently $\G(e)$ are closed in $\G(d_0(e))$,   $\G(d_1(v))$    with
respect to the topology induced by the profinite topology on $\pi$ (see
\cite[Proposition 2.5]{CB-13}).

\item[(ii)]  If $H$ is an infinite finitely generated subgroup of $\pi$ then  by   combination of Theorem 4.12 and Proposition 4.13  of Chapter 1 in \cite{DD-89} there exists a minimal $H$-invariant subtree $T_H$ of $S(\pi)$ and 
it is unique. Moreover, $T_H/H$ is finite.

\item[(iii)]  If  $S(\pi)$
  naturally embeds in $S(\widehat \pi)$,  the closure $\overline T_H$ in $S(\widehat \pi)$ is a $\overline H$-invariant profinite subtree and by  \cite[Lemma 1.5]{RZ-10} contains a unique (in $S(\widehat \pi)$) minimal $\overline H$-invariant subtree  $\widehat T_{\overline H}$. Moreover,  $\widehat T_{\overline H}/\overline H$ is finite since it is a subgraph of a quotient graph $\overline T_H/\overline H$ of  the finite graph $T_H/H$.
\end{enumerate}

\end{rem}

\begin{lem}\label{free H-action}  Within the hypotheses of Remark \ref{completion}  (iii)  suppose $\pi$ is subgroup separable  and  $H$ acts freely on $S(\pi)$. Then $\overline T_H=\widehat T_H$ and $T_H$ is a connected component of $\overline T_H$ (considered as a usual graph).\end{lem}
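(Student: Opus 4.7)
The statement splits into two assertions: (a) $\overline{T}_H=\widehat{T}_{\overline{H}}$ and (b) $T_H$ is a connected component of $\overline{T}_H$ viewed as an abstract graph. The inclusion $\widehat{T}_{\overline{H}}\subseteq \overline{T}_H$ in (a) is part of Remark \ref{completion}(iii). For the reverse, the plan is to use that the minimal invariant subtree of a (profinite) group acting on a (profinite) tree is the union of axes of its hyperbolic elements. Since $H$ acts freely on the tree $T_H$, every non-trivial $h\in H$ is hyperbolic on $T_H$ with axis $A_h\subseteq T_H$, and by minimality of $T_H$ the union of these axes is all of $T_H$. But each such $h$ is also hyperbolic on $S(\widehat\pi)$ with the same axis $A_h$, and axes of hyperbolic elements of $\overline{H}$ all lie in $\widehat{T}_{\overline{H}}$. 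Hence $T_H\subseteq \widehat{T}_{\overline{H}}$; taking closures and using that $\widehat{T}_{\overline{H}}$ is closed in $S(\widehat\pi)$ yields $\overline{T}_H\subseteq \widehat{T}_{\overline{H}}$.

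For (b), I would choose a finite fundamental domain $D\subseteq T_H$ for the free action of $H$, so that $T_H=H\cdot D$ disjointly, and, $D$ being finite, $\overline{T}_H=\overline{H}\cdot D$. Given an edge $e\in E(\overline{T}_H)$ with $d_0(e)\in V(T_H)$, write $e=\bar h\cdot e'$ with $\bar h\in \overline{H}$, $e'\in E(D)$, and $d_0(e)=h'\cdot u'$ with $h'\in H$, $u'\in V(D)$. Comparing the two expressions for $d_0(e)$ and using that $V(D)$ is a transversal for the free $H$-action, one gets $d_0(e')=u'$ and $(h')^{-1}\bar h\in \overline{H}_{u'}$, the $\overline H$-stabiliser of $u'$ in $S(\widehat\pi)$. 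Thus $e\in E(T_H)$ as soon as $\overline{H}_{u'}=\{1\}$.

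The crux is therefore the sub-claim that $\overline{H}$ acts freely on the vertices of $T_H$ regarded inside $S(\widehat\pi)$. Writing $u'$ as a coset $g\pi_{v_0}$ with $g\in\pi$, the $\widehat\pi$-stabiliser of $u'$ is $g\widehat{\pi}_{v_0}g^{-1}$, and freeness of the $H$-action on $S(\pi)$ gives $H\cap g\pi_{v_0}g^{-1}=\{1\}$. The plan here is to exploit subgroup separability of $\pi$ --- which makes $H$ and the (finitely generated) conjugate vertex group closed in $\pi$ and, more importantly, allows one to find cofinal chains of finite-index normal subgroups $N\triangleleft \pi$ in which the images of $H$ and $g\pi_{v_0}g^{-1}$ intersect trivially --- to upgrade the trivial intersection in $\pi$ to $\overline{H}\cap g\widehat{\pi}_{v_0}g^{-1}=\{1\}$ in the profinite completion. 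This separability upgrade is the main obstacle: it is where one needs genuinely more than residual finiteness. Once it is secured, $\bar h=h'\in H$ and hence $e=h'\cdot e'\in H\cdot E(D)=E(T_H)$, showing that $T_H$ is closed under edge-incidences in $\overline{T}_H$, i.e.\ it is a connected component.
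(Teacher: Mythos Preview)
You correctly isolate the crux of the whole lemma---that $\overline H$ acts freely on $S(\widehat\pi)$---but the mechanism you propose for it is not valid. Subgroup separability says that each finitely generated subgroup is closed; it does \emph{not} say that whenever $H\cap K=1$ for finitely generated $H,K\le\pi$ one can find a cofinal family of finite-index normal $N\lhd\pi$ with $HN\cap KN=N$, and hence it does not by itself give $\overline H\cap g\widehat{\pi}_{v_0}g^{-1}=1$ in $\widehat\pi$. The paper does not attempt any such upgrade: it uses LERF only to get $\overline H\cong\widehat H$ free profinite of the same rank as $H$, and then invokes \cite[Lemma~2.8]{RZ-96} as a black box for the free action of $\overline H$ on $S(\widehat\pi)$. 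Once that is in hand your argument for~(b) is essentially the paper's: since (as shown in part~(a)) $T_H/H=\overline T_H/\overline H$, any edge $e\in\overline T_H\setminus T_H$ with a vertex $v\in T_H$ satisfies $\bar he\in T_H$ for some $\bar h\in\overline H$, hence $\bar hv=hv$ for some $h\in H$, and freeness of the $\overline H$-action forces $\bar h=h$, a contradiction.

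For~(a) your axis approach is a genuinely different idea from the paper's, but it carries the same hidden dependence and adds a circularity. First, to know that a nontrivial $h\in H$ is hyperbolic on $S(\widehat\pi)$ rather than fixing some profinite vertex outside $S(\pi)$, you again need the free action of $\overline H$ from \cite[Lemma~2.8]{RZ-96}. Second, even granting that, your claim ``with the same axis $A_h$'' amounts to $\overline{A_h}=\widehat T_{\overline{\langle h\rangle}}$, which is precisely the cyclic case of the lemma you are proving; profinite trees are not abstract trees, so one cannot simply transport the abstract axis. The paper sidesteps this with a rank comparison: $H$ is free of rank $|E(T_H/H)\setminus D|$, and by \cite[Proposition~2.11]{Z-89} (using the free $\overline H$-action) $\overline H$ is free profinite of rank $|E(\overline T_H/\overline H)\setminus D'|$; since $T_H/H$ covers $\overline T_H/\overline H$ and the ranks agree, the covering is an isomorphism, and a transversal argument then yields $\overline T_H=\widehat T_{\overline H}$.
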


\begin{proof}  For a graph $\Delta$  denote by $D_\Delta$ a maximal subtree of $\Delta$.   Since $H$ acts freely on $S(\pi)$ it is free of rank $(T_H/H)\setminus D_{T_H/H}$.  Since $G$ is subgroup separable $\overline H\cong \widehat H$ is a free profinite groups of the same rank as $H$.  By \cite[Lemma 2.8]{RZ-96} $\overline H$ acts on $S(\widehat \pi)$ freely as well. By  \cite[Proposition 2.11]{Z-89} $\overline H$ is a free profinite group of rank $(\overline T_H/\overline H)\setminus D_{\overline T_H/\overline H}$ and since $T_H/H$ is a covering of $\overline T_H/\overline H$ (because of the free action of $\overline H$) we deduce that $T_H/H=\overline T_H/\overline H$.  To see that $\overline T_H=\widehat T_H$ let $\Sigma$ be a connected transversal of $\widehat T_H/\overline H$ in $S(\pi)$ with $d_0(\Sigma)\subseteq \Sigma$ and $\Omega$ its maximal connected subtree. Put $K=\langle k_e\in \pi\mid  k_ed_1(e)\in\Sigma\setminus \Omega \rangle$. Then $K$ is a free group freely generated by $\{k_e\in \pi\mid  k_ed_1(e)\in\Sigma\}$ and also  $\overline H$ is freely generated (as a profinite group) by  $\{k_e\in \pi\mid  k_ed_1(e)\in\Sigma\}$ (see \cite[Lemma 2.3]{Z-89}). It follows that $\widehat H\cong \overline H=\overline K\cong \widehat K$.  Since $\overline T_H/\overline H$ contains $ \widehat T_H/\overline H$ by Remark \ref{completion} (iii), $H=K* L$ is a free product for some $L$ and so $\widehat H=\widehat K\amalg  \widehat L$ the profinite free product. It follows from $\widehat H\cong\widehat K$ that $\widehat L=1=L$ so that  $K=H$.  Then by the minimality of $H$-invariant subtree $T_H$, we have $T_H=H\Sigma$ and so $\overline T_H=\overline{H\Sigma}=\widehat T_H$.

 If $T_H$ is not a connected component of $\overline T_H$ then there exists an edge $e\in \overline T_H\setminus T_H$ with an incident vertex $v\in T_H$. Since $\overline T_H/\overline H=T_H/H$,   $\bar h e\in T_H$ for some $\bar h\in \overline H$ and so $\bar h v \in T_H$. Hence there exists $h\in H$ with $hv=\bar hv$ and since the action of $\overline H$ on $S(\widehat \pi)$ is free we have $\bar h=h$ implying $e\in S(H)$, a contradiction.

\end{proof}

The following term will be important in the following section to perform an induction on hierarchy.

\begin{deff} We say that a residually finite group $G$ is adjustable if for any pair of  finitely generated  subgroups $A$ and $B$ of $G$ such that $\overline A^{\gamma}=\overline B$ for some $\gamma\in \widehat G$  there exists  $\beta \in \overline B$, such that $A^{\gamma\beta}\cap B\neq 1$.\end{deff}

\begin{rem}\label{virtually adjustable} Note that adjustability is preserved by commensurability. Indeed, a finite index subgroup of an adjustable group is clearly adjustable. 

If  a group $H$ has a finite index adjustable subgroup $G$, then $H$ is adjustable. Indeed, passing to core we may assume that $G$ is normal.  Suppose  $A, B$ are finitely generated  subgroups  of $G$ such that $\overline A^{\gamma}=\overline B$ for some $\gamma\in \widehat G$ and $A$, $B$ are finite then they coincide with their closures and there is nothing to show. If on the other hand they are infinite then $(\overline A\cap \widehat G)^{\gamma}=\overline B\cap \widehat G$ and since $G$ is adjustable,  there exists  $\beta \in \overline B\cap \widehat G$, such that $A^{\gamma\beta}\cap B\neq 1$  as needed.\end{rem}

\begin{pro}\label{fundamental adjustable}  Let $G=\pi_1(\G,\Gamma)$ be the fundamental group of a finite graph of  finitely generated adjustable groups. Suppose $G$ is subgroup separable.  Then $G$ is adjustable. 
\end{pro}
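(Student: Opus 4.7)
The plan is to analyse the actions of $A$ and $B$ on the Bass--Serre tree $T = S(G)$ of the given decomposition; by subgroup separability together with Remark~\ref{completion}(i), the natural map $T \hookrightarrow \widehat T = S(\widehat G)$ is an embedding. Given finitely generated subgroups $A, B \leq G$ with $\overline A^\gamma = \overline B$ for some $\gamma \in \widehat G$, if $A$ (hence $B$) is finite then both subgroups coincide with their closures in $\widehat G$, so $A^\gamma = B$ already holds, and taking $\beta = 1$ gives $A^{\gamma\beta} \cap B = B \neq 1$. Henceforth assume $A$ and $B$ are infinite, and let $T_A, T_B$ and $\widehat T_{\overline A}, \widehat T_{\overline B}$ denote the corresponding minimal invariant subtrees of $T$ and $\widehat T$ (all with finite quotients by Remark~\ref{completion}(ii)--(iii)); note $\gamma^{-1} \widehat T_{\overline A} = \widehat T_{\overline B}$.

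I first treat the elliptic case, where $A$ fixes a vertex $x$ of $T$, so $A \leq G_x$ with $G_x$ a $G$-conjugate of an adjustable vertex group of $(\G, \Gamma)$. A standard argument using separability of vertex and edge groups shows that ellipticity passes between $T$ and $\widehat T$, so $B$ too fixes some vertex $y$ of $T$ with $B \leq G_y$. Now $\overline B = \gamma^{-1}\overline A\gamma$ fixes both $y$ and $\gamma^{-1} x$, hence pointwise fixes the geodesic joining them in $\widehat T$. The geometric core is to produce $\beta_0 \in \overline B$ with $\gamma\beta_0 \in G \cdot \widehat{G_x}$---equivalently, to move $\gamma^{-1} x$ into the $G$-orbit of $y$ via an element of $\overline B$---using separability of the edge groups traversed by this geodesic together with the finite generation of $B$. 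Writing $\gamma\beta_0 = g\delta$ with $g \in G$ and $\delta \in \widehat{G_x}$, the subgroups $A$ and $g^{-1}Bg$ of the adjustable group $G_x$ are finitely generated and conjugate by $\delta$; adjustability of $G_x$ supplies $\beta_1 \in \overline{g^{-1} B g}$ with $A^{\delta\beta_1} \cap (g^{-1} B g) \neq 1$, and transporting back produces $\beta \in \overline B$ with $A^{\gamma\beta} \cap B \neq 1$.

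When $A$ is not elliptic, $T_A$ is non-degenerate. If some vertex $v \in T_A$ has non-trivial $A$-stabilizer $A_v := A \cap G_v$, then $A_v$ is finitely generated (since $T_A/A$ is finite) and elliptic in the adjustable vertex-group conjugate $G_v$; the corresponding vertex $w \in T_B$ selected via $\gamma^{-1} \widehat T_{\overline A} = \widehat T_{\overline B}$ has non-trivial stabilizer $B_w := B \cap G_w$, and applying the elliptic case to the pair $(A_v, B_w)$ produces $\beta \in \overline B$ with $1 \neq A_v^{\gamma\beta} \cap B_w \subseteq A^{\gamma\beta} \cap B$. If instead $A$ acts freely on $T_A$, Lemma~\ref{free H-action} identifies $T_A$ with a connected component of $\widehat T_{\overline A}$ and matches the quotients $T_A/A = \widehat T_{\overline A}/\overline A$; the $\gamma$-induced identification with $T_B/B$, together with subgroup separability applied to $B$, yields $\beta \in \overline B$ such that $\gamma\beta$ sends some non-trivial element of $A$ into $B$.

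The principal obstacle is the geometric step in the elliptic case: given that $\overline B$ fixes both $y \in T$ and $\gamma^{-1} x \in \widehat T$, exhibit $\beta_0 \in \overline B$ with $\beta_0 \cdot \gamma^{-1} x$ in the $G$-orbit of $y$. This requires a careful interplay between the embedded tree $T$, the profinite ambient $\widehat T$, the fixed-point structure of $\overline B$, and separability of the vertex and edge groups provided by subgroup separability of $G$.
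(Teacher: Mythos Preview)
Your proposal has a genuine gap, and it is precisely the step you flag as the ``principal obstacle''. Worse, the mechanism you propose for that step cannot work as stated: in your elliptic case $\overline A$ fixes $x$, hence $\overline B=\gamma^{-1}\overline A\gamma$ fixes $\gamma^{-1}x$, so $\beta_0\cdot\gamma^{-1}x=\gamma^{-1}x$ for \emph{every} $\beta_0\in\overline B$. Left-multiplying by elements of $\overline B$ does not move $\gamma^{-1}x$ at all, and there is no reason for $\gamma^{-1}x$ itself to lie in $T$ (equivalently, no reason for $\gamma\in G\,\widehat{G_x}$). Your ``non-elliptic with a non-trivial stabilizer'' case then inherits this gap, since you reduce it to the unresolved elliptic case. (A side issue: the assertion that $A_v$ is finitely generated ``since $T_A/A$ is finite'' is not justified; finiteness of the quotient graph does not by itself force vertex stabilizers to be finitely generated.)

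The paper avoids your obstacle by a different case split and a single geometric observation you are missing. It does \emph{not} isolate the elliptic situation; it splits only into (i) $A$ acts freely on $S$, handled via Lemma~\ref{free H-action} as you do, and (ii) $A$ has a non-trivial vertex stabilizer. In case~(ii) one picks $v\in T_A$ with $A_v\neq 1$ and uses the minimal-subtree machinery directly: since the minimal $\overline B$-invariant profinite subtree is unique, $\gamma^{-1}\widehat T_{\overline A}=\widehat T_{\overline B}$, so $\gamma^{-1}v\in\widehat T_{\overline B}\subseteq\overline{T_B}$. Now the crucial point: $T_B$ is dense in $\overline{T_B}$ and $\overline{T_B}/\overline B$ is a finite quotient of $T_B/B$ (Remark~\ref{completion}(iii)), so \emph{every $\overline B$-orbit in $\overline{T_B}$ meets $T_B$}. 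Hence there exists $\beta\in\overline B$ with $\beta^{-1}\gamma^{-1}v\in T_B\subseteq S$. Both $v$ and $\beta^{-1}\gamma^{-1}v$ now lie in the abstract tree $S$ and are in the same $\widehat G$-orbit; since $S/G=\widehat S/\widehat G$ they lie in the same $G$-orbit, giving $g\in G$ with $\gamma\beta g\in\widehat G_v$. This is exactly the element you were trying to manufacture, obtained without any ``careful interplay'' along a profinite geodesic. One then applies adjustability of $G_v$ to $A_v$ and $(B^g)_v$ to finish.
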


\begin{proof} Let $A,B$ be infinite  finitely generated  subgroups of $G$ such that $\overline A^{\gamma}=\overline B$ for some $\gamma\in \widehat G$ (if $A$ and $B$ are finite there is nothing to prove).  Let $S$ be a standard tree on which $G$ acts and  let  $T_A$ and $T_B$ be the minimal $A$-invariant and $B$-invariant subtrees of the standard tree $S$ on which $G$ acts (see Remark \ref{completion} (ii)).  Denote by $\widehat{T}_{\overline A}$ and $\widehat T_{\overline B}$  the (unique) minimal $\overline H_1$ and $\overline H_2$-invariant profinite subtrees in 
$\widehat{S}$ respectively (see Remark \ref{completion} (iii)).  Then, $\gamma^{-1} \widehat T_{\overline A}= \widehat T_{\overline B}$, by the uniqueness of  the minimal $\overline B$-invariant subtree
$\widehat T_{\overline B}$ in $\widehat{S}$. 

\medskip

(i) If the action of $A$ on $S$ is free, then by \cite[Lemma 2.8]{RZ-96} $\overline A, \overline B$ and $B$ have trivial edge and vertex  stabilizers as well. Hence by \cite[Proposition 1.6]{RZ-14} $\overline{T}_A=\widehat T_{\overline A}$, $\overline{T}_B=\widehat T_{\overline B} $ and  by  Lemma \ref{free H-action} $T_A$ and $T_B$ are the (usual) connected component of $\widehat T_A$ and $\widehat{T}_B$ respectively. Since $\overline T_B=\overline B T_B$ this means that $\beta^{-1}\gamma^{-1} T_A= T_B$ for some $\beta \in \overline B$. It follows that  $A^{\gamma\beta}= B$, since $T_B/A^{\gamma\beta}=\overline{T_B}/\widehat A^{\gamma\beta}=\overline{T_B}/\widehat B=T_B/B$ by  \cite[Proposition 1.6]{RZ-14}.  

\medskip

(ii) If $A_w\neq 1$ for some $w\in S$, then $A_v\neq 1$ for some $v\in T_A$. Since $\gamma^{-1} \widehat{T}_{\overline A}= \widehat{T}_{\overline B}$ and $\widehat{T}_{\overline B}/\overline B$ is a subgraph of a  quotient of  $T_B/B$ (see Remark \ref{completion}(iii)), we have $\beta^{-1}\gamma^{-1} v\in T_B$ for some $\beta \in \overline B$.   Since $S(G)/G=S(\widehat G)/\widehat G$, the vertices   $v,\beta\gamma v$ are in the same $G$-orbit and so there exists $g\in G$ with $gv=\beta^{-1}\gamma^{-1} v$ so that $\gamma \beta g\in \widehat G_v$. Therefore $\overline A_v^{\gamma\beta g}=(\overline A\cap \widehat G_v)^{\gamma\beta g}=  \overline A^{\gamma\beta g}\cap \widehat G_v=\overline B^g\cap \widehat G_v=(\overline B^g)_v$.   Since $G_v$ is adjustable there exists  $\beta_v^g\in (\overline B^g)_v$   such that  $A_v^{\gamma\beta g\beta_v^{g}}=A_v^{\gamma\beta\beta_v g}= (B^g)_v\leq B^g$.  Then $A_v^{\gamma\beta\beta_v}\leq  B$ and since $\beta\beta_v\in \overline B$   the result is proved. \end{proof}

\begin{rem}\label{free action} The case (i) of the proof of Proposition \ref{fundamental adjustable} shows that if the action of $A$ on $S$ is free then in addition we have $\beta^{-1}\gamma^{-1} T_A= T_B$ and   $A^{\gamma\beta}= B$. 

On the other hand if the action of $A$ on $S$ is not free the case (ii) of the proof shows that the element $a\neq 1$ with $a^{\gamma\beta}\in B$ exists in every non-trivial vertex stabilizer $A_v$.
\end{rem}

\section{General results}

A subgroup $H$ of a group $G$ is called a virtual retract if $H$
is a semidirect factor (retract) of some finite index subgroup of $G$.  A group $G$ is called hereditarily conjugacy separable if every finite index 
subgroup of $G$ is conjugacy separable. 

\medskip
We begin  this section with  the  key

\begin{lem}\label{elementsconjugate} Let $G$ be an adjustable  conjugacy separable group and $A,B$ be finitely generated separable   subgroups of $G$. Suppose there exists an element $ a\in A$  such that $ C_{\widehat G}(a)G=(\overline A\cap C_{\widehat G}(a))G $.  Then the conjugacy of\, $\overline A$ and $\overline B$ in $\widehat G$ implies  the conjugacy of $A$ and $B$  in $G$.  In particular, the statement holds if   $[C_G(a):\langle a\rangle]$ is finite and $\overline{C_G(a)}=C_{\widehat G}(a)$; the latter equality holds for every $1\neq a\in G$ if $G$ is hereditary conjugacy separable .\end{lem}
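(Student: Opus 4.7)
The plan is to show that $\gamma\in\overline A\cdot G$, because writing $\gamma=\alpha g$ with $\alpha\in\overline A$ and $g\in G$ then gives $\overline A^g=\overline A^{\alpha g}=\overline A^\gamma=\overline B$, whence $A^g=B$ by the separability of $A$ (applied to the conjugate $A^g$).

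First I would recast the centralizer hypothesis in the weaker but usable form $C_{\widehat G}(a)\subseteq \overline A\cdot G$, which follows immediately from $C_{\widehat G}(a)\subseteq C_{\widehat G}(a)\,G=(\overline A\cap C_{\widehat G}(a))\,G\subseteq \overline A\cdot G$. Under the ``in particular'' assumptions this inclusion becomes concrete: $C_{\widehat G}(a)=\overline{\langle a\rangle}\cdot F$ for a finite $F\subseteq G$, and since $\overline{\langle a\rangle}\subseteq \overline A$ the inclusion $C_{\widehat G}(a)\subseteq \overline A\cdot G$ is immediate.

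Next, adjustability applied to $(A,B)$ with conjugator $\gamma$ yields $\beta\in\overline B$ and a nontrivial $a_0\in A$ with $a_0^{\gamma\beta}\in B$. Conjugacy separability of $G$, applied to the $\widehat G$-conjugate pair $a_0,\,a_0^{\gamma\beta}\in G$, produces $g_0\in G$ with $a_0^{g_0}=a_0^{\gamma\beta}$, hence $\gamma\beta=c_0 g_0$ for some $c_0\in C_{\widehat G}(a_0)$, i.e. $\gamma=c_0 g_0\beta^{-1}$. Invoking the inclusion $C_{\widehat G}(a_0)\subseteq \overline A\cdot G$ (which is what the ``in particular'' hypothesis delivers for $a_0$ in the hereditarily conjugacy separable setting, where the centralizer equality holds uniformly over nontrivial elements), we write $c_0=\alpha h$ with $\alpha\in\overline A$, $h\in G$. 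Since $\beta^{-1}\in\overline B=\overline A^\gamma$, we have $\beta^{-1}=\gamma^{-1}\alpha'\gamma$ for some $\alpha'\in\overline A$. Substituting into $\gamma=\alpha h g_0\beta^{-1}=\alpha h g_0\,\gamma^{-1}\alpha'\gamma$ and cancelling the factor $\gamma$ on the right forces $\alpha h g_0\,\gamma^{-1}\alpha'=1$, which rearranges to $\gamma=(\alpha'\alpha)(h g_0)\in\overline A\cdot G$, as desired.

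The main obstacle is the mild mismatch between the hypothesis, which is stated for a specific $a\in A$, and the ingredient actually used in the argument, namely the same inclusion $C_{\widehat G}(\cdot)\subseteq \overline A\cdot G$ for the adjustability witness $a_0$. The ``in particular'' clause is precisely what bridges this gap: in a hereditarily conjugacy separable group the equality $\overline{C_G(x)}=C_{\widehat G}(x)$ is uniform over nontrivial $x\in G$, so whenever $[C_G(a_0):\langle a_0\rangle]$ is finite (automatic in the target applications such as hyperbolic 3-manifold groups), the inclusion needed for $a_0$ is obtained by the same reasoning that establishes it for $a$, and the argument closes.
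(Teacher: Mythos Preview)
Your argument is essentially the paper's proof unravelled into explicit algebra. Where the paper performs two successive ``replace $B$ by $B^{g^{-1}}$ and $\gamma$ by $\gamma\beta g^{-1}$'' reductions to force first $\gamma\in C_{\widehat G}(a)$ and then $\gamma\in\overline A$, you instead keep track of all factors and solve the resulting equation $\gamma=\alpha h g_0\gamma^{-1}\alpha'\gamma$ for $\gamma$; the two computations are equivalent and yield the same conclusion $\gamma\in\overline A\cdot G$, whence $A^g=B$ by separability exactly as in the paper.

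Your observation about the mismatch between the fixed $a$ of the hypothesis and the adjustability witness $a_0$ is well taken and, in fact, applies verbatim to the paper's own proof: there too the letter $a$ is silently re-used for the element produced by adjustability, and the centralizer hypothesis is invoked for that element rather than for the one postulated in the statement. As you note, the ``in particular'' clause (and, in every application of the lemma in the paper, hereditary conjugacy separability together with virtually cyclic centralizers) supplies the needed inclusion $C_{\widehat G}(a_0)\subseteq\overline A\cdot G$ uniformly, so the gap is harmless in context; but it is a genuine imprecision in the lemma as stated, and you were right to flag it.
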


\begin{proof}  Suppose $\overline A^{\gamma}=\overline B$ for some $\gamma$ in $\widehat G$. Since $G$ is adjustable   $a^{\gamma\beta}\in B$ for some $1\neq a\in A$, $\beta\in \overline B$. Since $G$ is conjugacy separable $a^g=a^{\gamma\beta}$  for some $g\in G$, so replacing $B$ with 
$B^{g^{-1}}$ and $\gamma$ with $\gamma\beta g^{-1}$ we may assume 
$\gamma\in C_{\widehat G}(a)$. By hypothesis, $ C_{\widehat G}(a)G=(\overline A\cap C_{\widehat G}(a))G $,   so $\gamma=a'g$ for some $a'\in \overline A\cap C_{\widehat G}(a)$, $g\in G$  and therefore once more replacing $B$ with 
$B^{g^{-1}}$ and $\gamma$ with $\gamma g^{-1}$  we may assume that  $\gamma \in \overline A\cap C_{\widehat G}(a)$. It follows then that $\overline A=\overline B$ and since $A$ and $B$ are separable we deduce that $A=B$.

To prove the last statement note that since  $[C_G(a): \langle a\rangle]$ is finite  and   $C_G(a)$ is dense in $C_{\widehat G}(a)$, then  $C_{\widehat G}(a)=\overline{\langle a\rangle} C_G(a)$ and so   $C_{\widehat G}(a)G= (\overline A\cap C_{\widehat G}(a))G $ clearly holds.  We conclude the proof observing that by \cite[Proposition 3.2]{M}   hereditarily conjugacy separability of $G$ implies   $C_{\widehat G}(a)=\overline{C_G(a)}$ for every $a\in G$.
 \end{proof}
 
 \begin{thm}\label{general} Let $G$ be an adjustable,   subgroup separable and hereditarily conjugacy separable group. Suppose that  for any element $1\neq g\in G$    the index $[C_G(g):\langle g\rangle]$ is finite. Then $G$ is subgroup conjugacy separable.\end{thm}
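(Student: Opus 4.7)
My plan is to reduce the theorem to Lemma~\ref{elementsconjugate}, whose hypotheses are essentially tailored to the standing assumptions of this theorem. First I would recast subgroup conjugacy separability of $G$ in profinite language: it suffices to show that for any two finitely generated subgroups $A,B\le G$, if $\overline A$ and $\overline B$ are conjugate in $\widehat G$, then $A$ and $B$ are conjugate in $G$. This reformulation is routine: the assumption that the images of $A$ and $B$ are conjugate in every finite quotient of $G$ defines a nested family of non-empty closed sets of conjugators in $\widehat G$, whose intersection is non-empty by compactness and produces an element $\gamma\in\widehat G$ with $\gamma\overline A\gamma^{-1}=\overline B$.

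Next I would dispose of the degenerate case: if $A=1$ then $\overline A=1$, hence $\overline B=1$ and $B=1=A$. So I may assume $A\neq 1$, and I pick any $a\in A$ with $a\neq 1$. Two inputs combine to verify the ``in particular'' clause of Lemma~\ref{elementsconjugate} for this $a$. The standing hypothesis of the theorem gives $[C_G(a):\langle a\rangle]<\infty$; and hereditary conjugacy separability gives $\overline{C_G(a)}=C_{\widehat G}(a)$ (via \cite[Proposition 3.2]{M}, as already invoked inside the lemma). Moreover, subgroup separability of $G$ makes $A$ and $B$ separable, hereditary conjugacy separability implies conjugacy separability, and adjustability is assumed outright. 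Thus all hypotheses of Lemma~\ref{elementsconjugate} hold, so the lemma yields that $A$ and $B$ are conjugate in $G$, as required.

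In fact there is no genuine obstacle at this stage of the argument: the theorem is a clean packaging of Lemma~\ref{elementsconjugate}, with all the real work absorbed either into the adjustability hypothesis (which in applications will be supplied by Proposition~\ref{fundamental adjustable}) or into the lemma itself. The only choice to make is that of the witness element $a\in A$, and the uniform centralizer-index hypothesis on $G$ ensures that \emph{any} non-identity element works. Therefore the main difficulty has been pushed, by design, into verifying the three structural conditions (adjustability, subgroup separability, hereditary conjugacy separability) in the concrete hyperbolic 3-manifold / virtually special setting.
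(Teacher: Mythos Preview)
Your proposal is correct and follows exactly the paper's approach: both proofs consist of verifying that the hypotheses of Lemma~\ref{elementsconjugate} (specifically its ``in particular'' clause) are satisfied for any non-trivial $a\in A$, and then invoking the lemma. You have simply spelled out more of the routine reductions (the compactness reformulation, the trivial case $A=1$) that the paper leaves implicit.
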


\begin{proof}  Let $A,B$ be finitely generated subgroups of $G$ such that  $\overline A^{\gamma}=\overline B$ for some $\gamma$ in $\widehat G$.
 Then all the premises of  
 Lemma \ref{elementsconjugate}  are satisfies   and so applying it we deduce that  $A$ and $B$ are conjugate in $G$ as required.\end{proof} 
 
Since the centralizer of a non-trivial  element in a torsion free hyperbolic group is cyclic  (see  \cite[Proposition 3.5]{Mih}) we deduce the following 
 
 \begin{cor}  A torsion free  adjustable,   subgroup separable and hereditarily conjugacy separable hyperbolic group is subgroup conjugacy separable.\end{cor}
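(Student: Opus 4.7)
The plan is to obtain this corollary as an immediate consequence of Theorem \ref{general}. The statement already supplies three of the four hypotheses of that theorem (adjustability, subgroup separability, and hereditary conjugacy separability), so the only thing left to verify is the centralizer condition: for every $1 \neq g \in G$, the index $[C_G(g):\langle g\rangle]$ is finite.

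To check this, I would invoke the classical fact, referenced just above the corollary as \cite[Proposition 3.5]{Mih}, that in a torsion-free hyperbolic group the centralizer of any non-trivial element is infinite cyclic. Write $C_G(g)=\langle c\rangle$ for some $c\in G$ of infinite order. Since $g\in C_G(g)$, we have $g=c^n$ for some nonzero integer $n$, so
\[
[C_G(g):\langle g\rangle]=[\langle c\rangle:\langle c^n\rangle]=|n|<\infty.
\]
Thus the remaining hypothesis of Theorem \ref{general} is satisfied, and applying that theorem yields that $G$ is subgroup conjugacy separable.

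There is essentially no obstacle here: the entire content of the corollary is packaged into the preceding theorem, and the only appeal beyond it is the well-known cyclicity of centralizers in torsion-free hyperbolic groups. Accordingly the proof can be written in a couple of lines, simply quoting \cite[Proposition 3.5]{Mih} and citing Theorem \ref{general}.
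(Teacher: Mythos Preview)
Your proposal is correct and matches the paper's approach exactly: the paper likewise deduces the corollary from Theorem \ref{general} by invoking \cite[Proposition 3.5]{Mih} to get that centralizers of non-trivial elements are (infinite) cyclic, hence contain $\langle g\rangle$ with finite index. Your additional line explicitly computing $[C_G(g):\langle g\rangle]=|n|$ is a harmless elaboration of what the paper leaves implicit.
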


\begin{deff}
The class of  groups with a \emph{ hierarchy} is the smallest class of groups, closed under isomorphism, that contains the trivial group, and such that, if
\begin{enumerate}
\item $G=A*_CB$ and $A$, $B$ each have a  hierarchy, or
\item $G=A*_C$ and $A$ has a  hierarchy,
\end{enumerate}
 then $G$ also has a hierarchy.
\end{deff}

Groups with hierarchy allow to use induction on their hierarchy. Thus we can deduce from Proposition \ref{fundamental adjustable} the following 

\begin{pro}  \label{adjustable}  A subgroup separable group with hierarchy is adjustable.\end{pro}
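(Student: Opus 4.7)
The plan is to induct on the length of the hierarchy of $G$, with Proposition \ref{fundamental adjustable} serving as the key tool in the inductive step. The base case, when $G$ is trivial, is immediate since the only finitely generated subgroup is trivial and adjustability holds vacuously.

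For the inductive step, by the definition of hierarchy we have either $G = A \ast_C B$ with both $A$ and $B$ carrying hierarchies of strictly smaller length, or $G = HNN(A, C, t)$ with $A$ carrying a hierarchy of strictly smaller length. To apply the inductive hypothesis to the vertex groups, I need to verify that they inherit subgroup separability from $G$. This is standard: if $K$ is a finitely generated subgroup of $A$ and $a \in A \setminus K$, then since $G$ is subgroup separable there is a finite-index subgroup $N \leq G$ containing $K$ but not $a$, and $N \cap A$ is a finite-index subgroup of $A$ with the same property. A routine structural induction also shows that every group with a hierarchy is finitely generated, since amalgamated products and HNN extensions preserve finite generation of the vertex groups and the trivial group is finitely generated.

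With these observations, the inductive hypothesis shows that $A$ (and $B$, in the amalgamated case) are adjustable. Thus $G$ is the fundamental group of a one-edge finite graph of finitely generated adjustable groups and is subgroup separable by assumption, so Proposition \ref{fundamental adjustable} applies and yields that $G$ is adjustable, completing the induction. I do not anticipate a serious obstacle here, since the main work has already been encapsulated in Proposition \ref{fundamental adjustable}; the only small care required is the verification that the vertex groups inherit finite generation and subgroup separability from $G$, both of which are routine.
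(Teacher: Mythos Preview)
Your proposal is correct and follows exactly the approach the paper intends: induction on the hierarchy, invoking Proposition~\ref{fundamental adjustable} at the inductive step. The paper's own proof is just the sentence preceding the proposition (``Thus we can deduce from Proposition~\ref{fundamental adjustable} the following''), and your additional verifications that the vertex groups inherit subgroup separability and finite generation are precisely the routine checks needed to make that deduction go through.
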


\begin{thm}\label{up}  A hyperbolic  hereditarily conjugacy separable  group $H$ having a finite index subgroup separable subgroup $G$ with hierarchy is infinite subgroup conjugacy separable.\end{thm}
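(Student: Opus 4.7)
The plan is to apply Lemma \ref{elementsconjugate} to $H$ (in place of $G$), with the witness element $a$ supplied by hyperbolicity. First I would transfer subgroup separability and adjustability from $G$ to $H$. Subgroup separability passes to finite-index overgroups by a standard argument: any finitely generated $K\leq H$ has $K\cap G$ finitely generated (Schreier) and closed in $G$, hence closed in $H$, and $K$ is a finite union of cosets of $K\cap G$. Adjustability of $G$ follows from Proposition \ref{adjustable}, since $G$ is subgroup separable with hierarchy, and Remark \ref{virtually adjustable} transfers it to the finite-index supergroup $H$. Conjugacy separability of $H$ is part of the hypothesis that $H$ is hereditarily conjugacy separable.

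Next, take finitely generated infinite subgroups $A,B\leq H$ with $\overline A$ and $\overline B$ conjugate in $\widehat H$; both are separable by subgroup separability of $H$. By the classification of subgroups of hyperbolic groups, the infinite subgroup $A$ is either virtually infinite cyclic or contains a nonabelian free subgroup, so in either case $A$ contains some element $a$ of infinite order. In a hyperbolic group the centralizer $C_H(a)$ is virtually infinite cyclic, and since $\langle a\rangle$ is an infinite subgroup of the virtually cyclic group $C_H(a)$ it must have finite index there. Finally, \cite[Proposition 3.2]{M} combined with hereditary conjugacy separability of $H$ yields $\overline{C_H(a)}=C_{\widehat H}(a)$. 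Both premises of the ``in particular'' clause of Lemma \ref{elementsconjugate} are satisfied, so $A$ and $B$ are conjugate in $H$, as required for infinite subgroup conjugacy separability.

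I do not foresee a serious obstacle: every nontrivial step is already prepared in the preceding material. Hyperbolicity enters only to produce an infinite-order element with virtually cyclic centralizer (so that $[C_H(a):\langle a\rangle]$ is finite), while hereditary conjugacy separability enters to supply the centralizer-closure equality demanded by the lemma. The restriction to \emph{infinite} subgroups in the conclusion is exactly what allows us to pick such an $a$; for finite subgroups the centralizer-index condition of Lemma \ref{elementsconjugate} can fail, which is why the theorem is stated only for infinite subgroups.
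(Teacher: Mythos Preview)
Your proof is correct and essentially identical to the paper's: both transfer subgroup separability and adjustability to $H$ via Proposition~\ref{adjustable} and Remark~\ref{virtually adjustable}, pick an infinite-order element $a\in A$ with virtually cyclic centralizer, and finish with Lemma~\ref{elementsconjugate}. The only cosmetic difference is that the paper obtains $a$ by intersecting $A$ with a torsion-free finite-index subgroup of $H$ (citing \cite{KW-00} for virtual torsion-freeness of residually finite hyperbolic groups), whereas you invoke the subgroup dichotomy for hyperbolic groups directly.
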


\begin{proof}  By Proposition \ref{adjustable} and Remark \ref{virtually adjustable} $H$ is adjustable, and since subgroup separability passes to  overgroups of finite index, is subgroup separable. 

 Let $H_1$, $H_2$ be infinite  finitely generated subgroups of $H$ such that $\overline H_1^{\gamma}=\overline H_2$ for some $\gamma\in \widehat H$.

Since a residually finite hyperbolic group is virtually torsion free (see  \cite[Theorem 5.1]{KW-00}) $H$ contains a torsion free finite index subgroup $K$ so replacing $G$ by $G\cap K$  we may assume that $G$ is torsion free. Then $H_1$ possesses an element of infinite order in $G$.  The centralizer of an element $h$ of infinite order in a hyperbolic group is virtually cyclic (see  \cite[Proposition 3.5]{Mih}) and so $h$ generates the subgroup of finite index in its centralizer. Thus  by Lemma \ref{elementsconjugate} $H_1$ and $H_2$ are conjugate in $H$.
\end{proof}

A group $G$ is called virtually  compact special if there exists a special
compact cube complex X having a finite index subgroup of $G$ as
its fundamental group (see \cite{W} for definition of special cube
complex). Since the hyperbolic  fundamental group of such a complex admits a hierarchy \cite{haglund_combination_2012} we deduce from Proposition \ref{adjustable} and Remark \ref{virtually adjustable}  the following

\begin{cor}  A hyperbolic virtually compact special group is adjustable.\end{cor}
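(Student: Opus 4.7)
The plan is to chain the two results cited in the paragraph immediately preceding the corollary, namely Proposition \ref{adjustable} (subgroup separable + hierarchy $\Rightarrow$ adjustable) and Remark \ref{virtually adjustable} (adjustability passes up from finite index subgroups), with the external input that the fundamental group of a compact special cube complex whose universal cover is Gromov hyperbolic admits a hierarchy (this is the combination theorem of Haglund--Wise cited as haglund\_combination\_2012).

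So let $G$ be a hyperbolic virtually compact special group. First I would pass to a finite index subgroup $G_0 \le G$ that is itself the fundamental group of a compact special cube complex $X$. Since hyperbolicity is a commensurability invariant, $G_0$ is still a hyperbolic group. By the Haglund--Wise combination theorem, a hyperbolic fundamental group of a compact special cube complex admits a hierarchy in the sense of the definition preceding Proposition \ref{adjustable}, so $G_0$ has a hierarchy. Moreover, by the Haglund--Wise theory of special cube complexes (which embeds $G_0$ as a quasiconvex subgroup of a right-angled Artin group), $G_0$ is subgroup separable; alternatively one notes that quasiconvex subgroups of virtually compact special hyperbolic groups are separable.

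Now I would apply Proposition \ref{adjustable} directly to $G_0$: it is subgroup separable and has a hierarchy, so it is adjustable. Finally, since $G_0$ has finite index in $G$, Remark \ref{virtually adjustable} promotes adjustability from $G_0$ to $G$, giving the conclusion that $G$ itself is adjustable.

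The main obstacle, if any, is the appeal to the external hierarchy result: one must check that the cited combination theorem applies in the precise form of the definition used here (iterated amalgamated products and HNN extensions bottoming out at the trivial group), which in the compact hyperbolic special setting is exactly what Haglund--Wise provide by repeatedly splitting along quasiconvex codimension-one subgroups dual to embedded two-sided hyperplanes. Once that input is granted, the corollary is a one-line chase through Proposition \ref{adjustable} and Remark \ref{virtually adjustable}.
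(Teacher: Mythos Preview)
Your argument is exactly the paper's: pass to a finite-index compact special subgroup, invoke the Haglund--Wise combination theorem for a hierarchy, then chain Proposition \ref{adjustable} with Remark \ref{virtually adjustable}. One caveat worth noting: the reasons you give for $G_0$ being subgroup separable (the quasiconvex embedding into a RAAG, separability of quasiconvex subgroups) only yield separability of \emph{quasiconvex} subgroups, not full LERF as Proposition \ref{adjustable} literally requires; the paper, however, does not address this point either and simply invokes Proposition \ref{adjustable} without further comment, so your write-up is at least as complete as the original.
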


\begin{lem}\label{centralizer} Let $A$ be a finite subgroup  of a hyperbolic virtually compact special group $G$. Then
\begin{enumerate} 

\item[(i)] $C_G(A)$ is a virtual retract of $G$ and  is virtually compact special.

\item[(ii)] $C_G(A)$ is dense in $C_{\widehat G}(A)$.

\end{enumerate}

\end{lem}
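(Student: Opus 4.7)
The plan for (i) is to first show that $C_G(A)$ is quasiconvex in the hyperbolic group $G$ and then apply the Haglund--Wise theory to deduce that it is a virtual retract. Recall that the centralizer $C_G(a)$ of any element $a$ in a hyperbolic group is quasiconvex, and that finite intersections of quasiconvex subgroups in a hyperbolic group are again quasiconvex. Since $A$ is finite, $C_G(A) = \bigcap_{a\in A} C_G(a)$ is quasiconvex. The Haglund--Wise theorem then guarantees that every quasiconvex subgroup of a virtually compact special hyperbolic group is a virtual retract of a finite-index subgroup. Finally, a retract of a virtually compact special group is itself virtually compact special (the retract inherits a cocompact action on a convex subcomplex of the original cube complex), yielding both assertions of (i).

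For (ii), the plan is to combine (i) with the hereditary conjugacy separability of $G$, which holds for hyperbolic virtually compact special groups by \cite{M}. Hereditary conjugacy separability gives $C_{\widehat G}(a) = \overline{C_G(a)}$ for every $a\in G$, whence
\[
C_{\widehat G}(A) \;=\; \bigcap_{a\in A} C_{\widehat G}(a) \;=\; \bigcap_{a\in A} \overline{C_G(a)},
\]
and the task reduces to proving $\bigcap_{a\in A} \overline{C_G(a)} = \overline{C_G(A)}$. From (i) let $G_1$ be a finite-index subgroup of $G$ with $C_G(A) \leq G_1$ and a retraction $\rho\colon G_1 \to C_G(A)$; enlarging one may assume $A \subseteq G_1$ and $G_1$ is normal in $G$. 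The strategy is to arrange $\rho$ to be $A$-equivariant (with $A$ acting on $G_1$ by conjugation and trivially on $C_G(A)$), extend continuously to $\hat\rho\colon \widehat{G_1}\to \overline{C_G(A)}$, and conclude by equivariance that $\hat\rho$ is the identity on $C_{\widehat G}(A)\cap \widehat{G_1}$. Since $\widehat G_1$ has finite index in $\widehat G$ with coset representatives taken from $G$, a straightforward coset argument then extends the conclusion from $C_{\widehat G}(A)\cap \widehat{G_1}$ to all of $C_{\widehat G}(A)$.

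The main obstacle is simultaneously securing the $A$-equivariance of $\rho$ and the triviality of $\widehat K \cap C_{\widehat G}(A)$, where $K = \ker \rho$. The $G$-level equality $K\cap C_G(A) = 1$ is automatic from $\rho$ being a retraction, but $A$-fixed points may grow upon profinite completion in general, so one cannot just cite this. Both difficulties should be resolved geometrically: one constructs $\rho$ from the $A$-equivariant cubical projection onto the convex subcomplex $X^A$ fixed by $A$ in the cube complex $X$ of $G$, and the separability properties of the cube complex afforded by the virtually compact special hypothesis ensure that the kernel of the resulting retraction carries no nontrivial $A$-invariant elements in its profinite completion. It is this geometric upgrade, rather than a purely group-theoretic averaging, that I expect to require the most care.
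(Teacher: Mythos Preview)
Your argument for (i) is correct and essentially identical to the paper's: centralizers of elements in a hyperbolic group are quasiconvex, finite intersections of quasiconvex subgroups are quasiconvex, and Haglund--Wise then gives both the virtual retract property and the virtually compact special conclusion.

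For (ii), however, your approach diverges from the paper's and, as written, has a genuine gap. You correctly reduce to showing $\bigcap_{a\in A}\overline{C_G(a)}=\overline{C_G(A)}$, but your proposed resolution---an $A$-equivariant retraction built from a cubical projection onto $X^A$, together with the claim that the profinite kernel has no nontrivial $A$-fixed points---is not actually carried out. Nearest-point projection to a convex subcomplex of a CAT(0) cube complex is not a group homomorphism, and the Haglund--Wise canonical retraction is not in any evident sense $A$-equivariant; more seriously, the assertion that ``separability properties \ldots\ ensure that the kernel \ldots\ carries no nontrivial $A$-invariant elements in its profinite completion'' is precisely the statement at issue and is left without argument. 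You flag this yourself as the step requiring ``the most care,'' but the proposal stops there.

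The paper sidesteps this difficulty entirely by induction on $|A|$. One writes $C_G(A)=C_{C_G(K)}(a)$ for a maximal proper subgroup $K<A$ and $a\in A\setminus K$. The key observation is that, by part (i), the group $H:=C_G(K)$ is again hyperbolic and virtually compact special, hence hereditarily conjugacy separable in its own right; thus the single-element identity $\overline{C_H(a)}=C_{\widehat H}(a)$ is available \emph{inside} $H$. Since $H$ is a virtual retract of $G$, the profinite topology of $G$ induces the full profinite topology on $H$, so $\widehat{C_G(K)}=\overline{C_G(K)}=C_{\widehat G}(K)$ (the last equality by the induction hypothesis). The chain
\[
\overline{C_G(A)}=\overline{C_{C_G(K)}(a)}=C_{\widehat{C_G(K)}}(a)=C_{\overline{C_G(K)}}(a)=C_{C_{\widehat G}(K)}(a)=C_{\widehat G}(A)
\]
then finishes the proof. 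This route uses (i) at every inductive step and never needs an equivariant retraction or any control of profinite fixed points in a kernel.
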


\begin{proof}  (i)  Since the group $G$ is hyperbolic, it is well-known that centralizers of elements in $G$ are quasiconvex
(see, for example, \cite[Ch. III.$\Gamma$, Prop. 4.14]{B-H}) and is also hyperbolic (cf. \cite[Lemma 3.8]{Mih}). Then inductively on the number of elements using $C_G(a)\cap C_G(b)=C_{C_G(a)}(b)$ we deduce that   the centralizer of any finite subgroup of a hyperbolic group is quasiconvex and hyperbolic.    In \cite[Corollary  7.8]{HW-2008} Haglund and Wise proved that
any quasiconvex subgroup of $G$ is virtually compact special and  in \cite{HW-2010} that it is virtual retract of $G$. Thus the centralizer of a finite subgroup in $G$ is a virtual retract of $G$.

(ii) Using (i)  we prove  (ii)  by induction on $|A|$ . Let $K$ be a maximal subgroup of $A$ and $a\in A\setminus K$. Then $C_G(A)=C_{C_G(K)}(a)$  and  $C_{\widehat G}(A)=C_{C_{\widehat G}(K)}(a)$.  By induction hypothesis  $C_G(K)$ is dense in $C_{\widehat G} (K)$ and  $C_{C_G(K)}(a)$ is dense in $C_{\widehat{C_G(K)}}(a)$. Then using  that the profinite topology of $G$ induces the full profinite topology on virtual retracts we have $\overline{C_G(A)}=\overline{C_{C_G(K)}(a)}=C_{\widehat{C_G(K)}}(a)=C_{\overline{C_G(K)}}(a)=C_{C_{\widehat G}(K)}(a)=C_{\widehat G}(A)$ as required.
\end{proof} 


We are in a position now to prove the main general result of this paper.

\begin{thm}  \label{HVS}  A  hyperbolic subgroup separable virtually compact special   group  $G$ is   infinite subgroup conjugacy separable. If all finite subgroups of $G$ are soluble then $G$ is subgroup conjugacy separable. \end{thm}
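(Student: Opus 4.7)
The theorem bundles two statements: (1) infinite subgroup conjugacy separability of $G$, and (2) full subgroup conjugacy separability when the finite subgroups of $G$ are soluble.

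\textbf{Part (1).} I would invoke Theorem~\ref{up}. Its hypotheses are: $G$ hyperbolic (given) and hereditarily conjugacy separable, together with a finite-index subgroup separable subgroup of $G$ admitting a hierarchy. Hereditary conjugacy separability follows from Minasyan's theorem for virtually compact special hyperbolic groups (the same result already cited in the proof of Lemma~\ref{elementsconjugate}). For the last hypothesis, pick a finite-index subgroup $G_0 \le G$ which is the fundamental group of a compact special cube complex: by the Haglund--Wise combination theorem, $G_0$ admits a hierarchy, and $G_0$ is subgroup separable because subgroup separability passes to finite-index subgroups. Theorem~\ref{up} then yields infinite subgroup conjugacy separability.

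\textbf{Part (2).} Let $A, B$ be finitely generated subgroups of $G$ with $\overline{A}^\gamma = \overline{B}$ for some $\gamma \in \widehat G$; we seek $g \in G$ with $A^g = B$. If both are infinite, apply Part (1); if exactly one is finite, the two closures have different (finite vs.\ infinite) cardinality and so cannot be conjugate in $\widehat G$---this case is vacuous. The substantive case is $A, B$ both finite, hence soluble; since finite subgroups are closed, $\overline A = A$ and $\overline B = B$, and the hypothesis becomes $A^\gamma = B$ in $\widehat G$. I would argue by induction on $|A|$; the base case $|A|=1$ is trivial. For $|A|>1$, use solubility to select a non-trivial abelian characteristic subgroup $N \trianglelefteq A$ (for instance, the last non-trivial term of the derived series, or $Z(F(A))$ with $F(A)$ the Fitting subgroup), and a non-trivial element $a \in N$. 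Hereditary conjugacy separability of $G$ yields $g_0 \in G$ with $a^{g_0} = a^\gamma$; replacing $B$ by $B^{g_0^{-1}}$ and $\gamma$ by $\gamma g_0^{-1}$, we may assume $a \in A \cap B$ and $\gamma \in C_{\widehat G}(a)$. By Lemma~\ref{centralizer}, $C_G(a)$ is itself a hyperbolic virtually compact special group (so subgroup separable, hereditarily conjugacy separable, with soluble finite subgroups) and $C_{\widehat G}(a) = \overline{C_G(a)}$. The argument then recurses along two sub-cases: if $a \in Z(A)$, both $A$ and $B$ lie in $C_G(a)$ and we continue inside the (proper, provided $a \notin Z(G)$) subgroup $C_G(a)$; if $a \notin Z(A)$, then $|C_A(a)| < |A|$ and the induction hypothesis produces $h \in C_G(a)$ with $C_A(a)^h = C_B(a)$, which must then be lifted to a conjugation of $A$ and $B$ themselves.

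\textbf{Main obstacle.} The hardest part is the finite soluble case, and specifically making the induction terminate when $a \in Z(A)$---there $|A|$ does not decrease, so the induction on $|A|$ alone stalls. This forces a secondary induction parameter, plausibly tied to the Haglund--Wise hierarchy on $G$ and the fact (from Lemma~\ref{centralizer}(i)) that $C_G(a)$ is virtually compact special and therefore carries its own hierarchy. A subsidiary difficulty, in the case $a \notin Z(A)$, is promoting the conjugacy of the centralizers $C_A(a)$ and $C_B(a)$ to that of $A$ and $B$; this should be handled by a density result for normalizers of finite subgroups analogous to Lemma~\ref{centralizer}(ii), exploiting that $N_{\widehat G}(N)/C_{\widehat G}(N)$ embeds into the finite group $\mathrm{Aut}(N)$.
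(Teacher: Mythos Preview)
Your Part~(1) is correct and matches the paper's argument exactly: hereditary conjugacy separability from \cite{MZ-15}, a finite-index special subgroup with a Haglund--Wise hierarchy, and then Theorem~\ref{up}.

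Your Part~(2), however, has a genuine gap, and it is precisely the one you flag. When $a\in Z(A)$ you propose to ``continue inside $C_G(a)$'', but you have no induction parameter that decreases: $|A|$ is unchanged, and there is no well-founded ordering on the ambient groups $G\supset C_G(a)\supset\cdots$ that you can appeal to. The suggestion of a secondary parameter tied to hierarchy depth does not work as stated, since $C_G(a)$ is only known to be virtually compact special via quasiconvexity and virtual retraction (Lemma~\ref{centralizer}), not via a shorter hierarchy than $G$. The second sub-case ($a\notin Z(A)$) is also problematic: knowing $C_A(a)^h=C_B(a)$ does not determine $A^h$ versus $B$, and the normalizer-density remark you sketch would at best let you adjust $h$ within $N_G(C_A(a))$, which still does not recover $A$ from $C_A(a)$.

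The paper sidesteps both difficulties by choosing the inductive witness differently. Instead of a single element $a$, take a maximal proper \emph{normal} subgroup $K\lhd H_1$ (solubility guarantees $[H_1:K]$ is prime). Now two applications of the induction hypothesis on order suffice, and both strictly decrease: first, since $|K|<|H_1|$, the induction hypothesis (applied inside $G$) lets you conjugate so that $K^{\gamma}=K$, i.e.\ $\gamma\in N_{\widehat G}(K)$. Lemma~\ref{centralizer} gives $\overline{N_G(K)}=N_{\widehat G}(K)$ with the full induced profinite topology, so one may replace $G$ by $N_G(K)$ and assume $K\lhd G$. Second, pass to $G/K$: this is again hyperbolic, subgroup separable, and virtually compact special (a torsion-free finite-index subgroup of $G$ meets $K$ trivially and embeds in $G/K$), with soluble finite subgroups; since $|H_1/K|<|H_1|$, the induction hypothesis yields $g\in G$ with $(H_1/K)^{gK}=H_2/K$, hence $H_1^g=H_2$. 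The point is that working with a normal subgroup $K$ rather than a central element $a$ turns your stalled recursion into a clean two-step descent on $|H_1|$.
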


\begin{proof} Note first that by \cite[Theorem 1.1]{MZ-15} a virtually compact special group is hereditarily  conjugacy separable. Let $H_1$, $H_2$ be infinite  finitely generated subgroups of $G$ such that $\overline H_1^{\gamma}=\overline H_2$ for some $\gamma\in \widehat H$. If $H_1, H_2$ are infinite the result follows from Theorem \ref{up}. 

\medskip
Assume $H_1, H_2$ are finite. We shall use induction on the order $|H_1|=|H_2|$.
 The conjugacy separability of $G$ implies  the result for $H_1$  cyclic of order $p$. 

Suppose now $|H_1|>p$ and  let $A$ be a maximal proper normal subgroup of $H_1$.  Since the $C_G(A)$ has finite index in $N_G(A)$ and by Lemma \ref{centralizer} is virtual retract of $G$ and in particular is finitely generated,  we deduce from subgroup separability  of  $G$   existence  of a finite index normal subgroup $U$ of $G$ such that $U\cap N_G(A)\leq C_G(A)$. Since $\widehat G=\widehat UG$ replacing $H_2$ by its conjugate in $G$ we may assume that $\gamma\in \widehat U$. Moreover,  since $G$ is virtually torsion free we may assume that $U$ is torsion free. Then $H_2\leq H_1\overline U\cap G= H_1U$ and so we may assume that $G=UH_1=U\rtimes H_1$. By induction hypothesis $A^{\gamma g}=A$ for some $g\in G$ so replacing $H_2$ by $H_2^{g^{-1}}$ we may assume that $\gamma\in C_{\widehat U}(A)=N_{\widehat U}(A)$ and so $A\leq H_1\cap H_2$. Since $C_G(A)$ is virtual retract and  virtually compact special by Lemma \ref{centralizer} (i) so is $N_G(A)$,  and since $C_G(A)$  is dense in $C_{\widehat G}(A)$ by Lemma \ref{centralizer}(ii),  we have $\overline{ N_G(A)}=\overline{C_U(A)}\rtimes H_1=C_{\widehat G}(A)\rtimes H_1=N_{\widehat G} (A)$.  In particular, the induced profinite topology on $N_G(A)$ is the full profinite topology and so $\overline{N_G(A)}=\widehat{N_G(A)}$. Thus we may assume that $G=N_G(A)$ and so $A$ to be normal in $G$. 

Since $|H_1/A|<  H_1$ by the induction hypothesis $H_1/A^{gA}=H_2/A$ for some $g\in G$. Then $H_1^g=H_2$ as needed.
\end{proof} 



\section{Manifolds}

Here we apply the general result of the previous section to closed and cusp hyperbolic 3-manifolds. For closed 3-manifolds the result follows quickly.

\begin{thm}
The fundamental group $\pi_1M$ of a  closed hyperbolic  3-manifold $M$  is subgroup conjugacy separable.
\end{thm}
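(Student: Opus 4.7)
The plan is to derive the theorem as an immediate application of Theorem \ref{HVS} by verifying each of its hypotheses for $\pi_1 M$.

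First I would observe that since $M$ is closed hyperbolic, its fundamental group $\pi_1 M$ acts freely, properly discontinuously and cocompactly on $\mathbb{H}^3$. Two consequences follow at once: $\pi_1 M$ is a word hyperbolic group (being quasi-isometric to $\mathbb{H}^3$), and $\pi_1 M$ is torsion-free (because any torsion element would have a fixed point in $\mathbb{H}^3$, contradicting the freeness of the action). In particular, the only finite subgroup of $\pi_1 M$ is the trivial subgroup, which is trivially soluble.

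Next I would invoke the virtual compact specialness of $\pi_1 M$, which is Agol's theorem \cite{A-13}, and the subgroup separability (LERF) of $\pi_1 M$, which is \cite[Corollary 5.5(1)]{AFW-15} as already recalled in the introduction. At this point all the premises of Theorem \ref{HVS} are in place: $\pi_1 M$ is hyperbolic, subgroup separable, virtually compact special, and all its finite subgroups are soluble (in fact trivial).

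Applying the second assertion of Theorem \ref{HVS} we conclude that $\pi_1 M$ is subgroup conjugacy separable. There is really no obstacle here, since the heavy lifting has been done in the general section; the only thing one needs to be slightly careful about is to cite torsion-freeness for the closed case (so that the solubility hypothesis on finite subgroups is vacuous), rather than only appealing to the ``infinite subgroup'' conclusion of Theorem \ref{HVS}. The cusped case, presumably treated in a subsequent theorem, is the place where genuine additional work (dealing with parabolic subgroups and the peripheral structure) should be expected.
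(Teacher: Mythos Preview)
Your proposal is correct and follows essentially the same route as the paper: verify that $\pi_1 M$ is hyperbolic, virtually compact special (Agol), and subgroup separable, then apply Theorem~\ref{HVS}. Your explicit observation that torsion-freeness makes the solubility hypothesis on finite subgroups vacuous is a nice touch---the paper's proof leaves this implicit when invoking Theorem~\ref{HVS}.
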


\begin{proof}  In this case $\pi_1M$ is   hyperbolic. By result of Agol \cite{A-13} $\pi_1M$ is virtually compact special and subgroup separable.  It is also hereditarily conjugacy separable (see  \cite[G8]{AFW-15}). Thus the result follows from Theorem \ref{HVS}.
\end{proof}

We consider the cusped case now. Recall that a subgroup of $\pi_1M$  is called {\it peripheral}  if it is conjugate to the fundamental group of a cusp and so is isomorphic to $\Z\times \Z$.  

It is well-known that $\pi_1M$ is relatively hyperbolic to peripheral subgroups (\cite[Theorem 5.1]{F-98}).
  We refer the reader to \cite{hruska_relative_2010} for a survey of the various equivalent definitions of relative hyperbolicity.


\begin{thm}
The fundamental group $H=\pi_1M$ of a  cusped hyperbolic 3-manifold $M$  is subgroup conjugacy separable.
\end{thm}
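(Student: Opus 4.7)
The plan is to adapt the proof of Theorem~\ref{HVS} to the relatively hyperbolic setting, handling separately those finitely generated subgroups that contain a loxodromic element and those that are entirely parabolic. First I would collect the structural properties of $H=\pi_1M$: by Wise's theorem $H$ is virtually compact special, it is subgroup separable (\cite[Corollary 5.5]{AFW-15}), hereditarily conjugacy separable (\cite[Theorem 1.1]{MZ-15} combined with virtual specialness), torsion-free (being a cusped hyperbolic manifold group), and it admits a hierarchy arising from its virtually compact special cube complex structure. Consequently Proposition~\ref{adjustable} and Remark~\ref{virtually adjustable} give that $H$ is adjustable. Finally, $H$ is relatively hyperbolic with respect to its peripheral subgroups, each isomorphic to $\mathbb Z\times\mathbb Z$.

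Let $H_1, H_2$ be finitely generated with $\overline{H}_1^{\gamma}=\overline{H}_2$ for some $\gamma\in\widehat{H}$, and assume $H_1\neq 1$. A standard dichotomy for finitely generated subgroups of relatively hyperbolic groups with abelian peripherals says that either $H_1$ contains a loxodromic element, or $H_1$ is contained in a unique maximal peripheral subgroup $P\cong\mathbb Z^2$. In the first case, the centralizer of a loxodromic element $h\in H_1$ is virtually cyclic, so $[C_H(h):\langle h\rangle]<\infty$, and Lemma~\ref{elementsconjugate} applied with $a=h$ yields the conjugacy of $H_1$ and $H_2$ in $H$.

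For the peripheral case, $H_1\le P$. Using adjustability I would produce $\beta\in\overline{H}_2$ and $1\neq h\in H_1$ with $a:=h^{\gamma\beta}\in H_2$; conjugacy separability then yields $g\in H$ with $a=h^g$. Hence $\gamma\beta g^{-1}\in C_{\widehat H}(h)=\overline{C_H(h)}=\overline{P}$ (using hereditary conjugacy separability via \cite[Proposition~3.2]{M} and $C_H(h)=P$). Writing $\gamma=\alpha g\beta^{-1}$ with $\alpha\in\overline{P}$ and using $\beta\in\overline{H}_2$, the hypothesis rearranges to $\overline{H}_1^{\alpha}=\overline{H}_2^{g^{-1}}$. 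Since $\overline{H}_1\le\overline{P}$ is abelian, so is $\overline{H}_2$, hence $H_2\le C_H(a)=P^g$ and therefore $\overline{H}_2^{g^{-1}}\le\overline{P}$. Both $\overline{H}_1^{\alpha}$ and $\overline{H}_2^{g^{-1}}$ now lie in the abelian profinite group $\overline{P}$, so conjugation by $\alpha$ is trivial and $\overline{H}_1=\overline{H}_2^{g^{-1}}$. Intersecting with $H$ and using subgroup separability gives $H_1=H_2^{g^{-1}}$, so $H_1$ and $H_2$ are conjugate in $H$.

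The main obstacle I anticipate is marshalling the structural prerequisites for the peripheral case: that $P$ inherits the full profinite topology from $H$ (so $\overline{P}$ is identified with the abelian group $\widehat{\mathbb Z^2}$), that the centralizer of a parabolic element is exactly its unique maximal peripheral subgroup, and that peripheral subgroups are almost malnormal (so that $H_2$ lies in a single peripheral as soon as it contains one parabolic element). These are standard facts for cusped hyperbolic $3$-manifold groups, but their careful deployment inside the adjustability argument is the substantive point.
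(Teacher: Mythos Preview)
Your proposal is correct and follows essentially the same strategy as the paper: both establish adjustability via the hierarchy, invoke Lemma~\ref{elementsconjugate} when $H_1$ contains an element with virtually cyclic centralizer (the non-peripheral/loxodromic case), and handle the peripheral case by combining adjustability, conjugacy separability, and the fact that the centralizer of a nontrivial parabolic is exactly its peripheral subgroup. The only difference is cosmetic: the paper splits the peripheral case into the rank~2 subcase (where Lemma~\ref{elementsconjugate} applies directly since $A$ has finite index in $C_H(a)=P$) and the cyclic subcase, whereas you give a single uniform argument that works for any subgroup of $P$ by pushing everything into the abelian closure $\overline{P}$ and reading off $\overline{H_1}=\overline{H_2}^{g^{-1}}$.
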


\begin{proof}  
  The group  $H$ is subgroup separable ( \cite[Corollary 5.5]{AFW-15})  and hereditarily conjugacy separable (see in \cite[G8 ]{AFW-15}).   Since by \cite[Theorem 9.1]{WZ-15}  $H$ admits a hierarchy   by Proposition \ref{adjustable} combined with Remark \ref{virtually adjustable} it is  adjustable.

\medskip
Let $A$, $B$ be  finitely generated subgroups of $H$ such that $\overline A^{\gamma}=\overline B$ for some $\gamma\in \widehat H$.  Note that by \cite[Proposition 3.2]{M} hereditary conjugacy separability of $G$ implies $\overline{C_G(a)}=C_{\widehat G}(a)$ for any $a$.

 If $A$ is not contained in a peripheral subgroup then since  $H$ is relatively hyperbolic to   peripheral subgroups (\cite[Theorem 5.1]{F-98}) there exists $a\in A$ such that $C_G(a)$ is infinite cyclic (cf. \cite[Theorem 4.3]{O-06}). Hence $[C_G(a): \langle a\rangle]$ is finite.  Therefore we deduce from Lemma \ref{elementsconjugate} that  $A$ and $B$ are conjugate.

If $A$ is contained in a peripheral subgroup $P$ then  it is either free abelian of rank 2 and so of finite index in $P$ or cyclic. In the first case the condition  $C_{\widehat G}(a)G=(\overline A\cap C_{\widehat G}(a))G $ is satisfied  so by Lemma \ref{elementsconjugate}  $A$ and $B$ are conjugate. If $A$ and $B$ are cyclic, then since $H$ is adjustable there exists $1\neq a\in A$ and $\beta\in \overline B$  with $a^{\gamma\beta} \in B$ and since $H$ is conjugacy separable $a=a^{\gamma\beta h}$ for some $h$ in $H$. Then conjugating $B$ by $h^{-1}$  we may assume that $a=a^{\gamma\beta}$ and since peripheral subgroups   pairwise  intersect trivially (cf. \cite[Lemma 4.7]{HWZ-13} this implies that $B\leq P$. But $P$ is free abelian, so cyclic subgroups $A,B$ intersecting non-trivially must coincide.
\end{proof}

\bigskip
{\it Author's Adresses:}

\medskip
Sheila C. Chagas\\
Departamento de Matem\'atica,\\
~Universidade de Bras\'\i lia,\\
70910-900 Bras\'\i lia DF,\\
Brazil

sheila@mat.unb.br

\medskip
Pavel A. Zalesski\\
Departamento de Matem\'atica,\\
~Universidade de Bras\'\i lia,\\
70910-900 Bras\'\i lia DF\\
Brazil

pz@mat.unb.br


\begin{thebibliography}{99}

\bibitem{A-13} I. Agol, The virtual Haken conjecture, {\it Doc. Math.}, {\bf 18} (2013)
1045--1087. With an appendix by Agol, Daniel Groves, and Jason
Manning.

\bibitem{Mih} J. Alonso, T. Brady, D. Cooper, V. Ferlini, M. Lustig, M. Mihalik, M. Shapiro, and H. Short, Notes on word hyperbolic groups. In: Short, H.B., ed.
Group Theory from a Geometrical Viewpoint, Proc. ICTP Trieste, \emph{World Scientific Publishing Co., Inc., River Edge, NJ}, pp. 3-63.

\bibitem{AFW-15}  M. Aschenbrenner, S. Friedl and Henry Wilton, 3-manifold groups,  EMS Series of Lectures in Mathematics. European Mathematical Society (EMS), Z\"urich, 2015. xiv+215 pp.


\bibitem{B-67} G. Baumslag,  Residually finite one-relator groups, {\it Bull. Amer. Math. Soc.}
{\bf 73} (1967) 618--620.


\bibitem{BB} O. Bogopolski and K-U. Bux, Subgroup Conjugacy Separability for Surface Groups,
 arXiv:1401.6203v1.pdf

\bibitem{BB-16} O. Bogopolski and K-U. Bux,  On subgroup conjugacy separability of hyperbolic QVH-groups,
 arXiv:1602.03229 

\bibitem{BG} O. Bogopolski, F. Grunewald, On subgroup conjugacy separability
in the class of virtually free groups,  Max-Planck-Institute of Mathematics Preprint Series, n. 110 (2010), 18 pages. arXiv:1012.5122.


\bibitem{B-H} M. Bridson, A. Haefliger, \emph{Metric spaces of non-positive curvature}. Grundlehren der Mathematischen
Wissenschaften [Fundamental Principles of Mathematical Sciences], 319. \emph{Springer-Verlag, Berlin}, 1999.
xxii+643 pp.



\bibitem{CZ-15}  S. C. Chagas and  P. A.  Zalesskii,  Subgroup conjugacy separability of free-by-finite groups. {\it Archiv der Mathematik} {\bf 104} (2015)  101-109.

\bibitem{CZ-16} S. C. Chagas and P. A.  Zalesskii, {\it  Limit groups are subgroup conjugacy separable}.  {\it Journal of Algebra} (to appear). arXiv:1511.04607. 

\bibitem{CB-13} O, Cotton-Barratt, Detecting ends of residually finite groups in profinite completions. {\it Mathematical Proceedings of the Cambridge Philosophical Society} {\bf 155}  (2013) 379 - 389.

\bibitem{DD-89}   W.  Dicks and  M.J. Dunwoody,  {\it Groups acting on graphs}, Cambridge Univ. Press, Cambridge, 1989.

\bibitem{F-98} B. Farb,  Relatively hyperbolic groups. {\it Geom. Funct. Anal.} 8 (1998), no. 5, 810-840.




\bibitem{HW-2010} F. Haglund and  D.T. Wise, Coxeter groups are virtually
special, {\it Advances in Mathematics}, {\bf 224} (2010)
1890--1903.

\bibitem{HW-2008} F. Haglund and D.T. Wise, Special cube complexes, {\it Geom. Funct. Anal.}, {\bf 17} (2008), n0.5, 1551--1620.

\bibitem{HWZ-13} E. Hamilton, H. Wilton and P. A. Zalesskii.
 Separability of double cosets and conjugacy classes in 3-manifold
  groups.  {\it J. Lond. Math. Soc. (2)}, {\bf 87} (2013) 269--288.



\bibitem{haglund_combination_2012}
Fr{\'e}d{\'e}ric Haglund and Daniel~T. Wise.
 A combination theorem for special cube complexes.
{\it Ann. of Math. (2)} {\bf176} (3) (2012) 1427--1482.


\bibitem{hruska_relative_2010}
G. C. Hruska.
 Relative hyperbolicity and relative quasiconvexity for countable
  groups. {\it Algebr. Geom. Topol.}  {\bf 10} (2010) 1807--1856.


\bibitem{KW-00} I. Kapovich and  D Wise,    The equivalence of some residual properties of word-hyperbolic groups. {\it J. Algebra} {\bf 223} (2000)  


\bibitem{M-58} Mal'cev A. I.,  On Homormorphisms onto Finite
Groups, {\it Uchen. Zap. Ivanovskogo Gos. Ped. Inst}., {\bf 18} (1958)
40-60.



\bibitem{M} A. Minasyan,  Hereditary conjugacy separability of
right angled Artin groups and its applications, {\it Groups Geom.
Dyn.} {\bf 6} (2012)  335--388 .



\bibitem{MZ-15}  A. Minasyan and P. Zalesskii,  Virtually compact special hyperbolic groups are conjugacy separable.  {\it Math. Z.}  (to appear).  arXiv:1504.00613.





\bibitem{O-06} D.Osin, 
Elementary subgroups of relatively hyperbolic groups and bounded generation. {\it Internat. J. Algebra Comput.} {\bf 16} (2006)  99-118.


\bibitem{RZ-96} L. Ribes and  P. Zalesskii, Conjugacy separability and amalgamated
free products of groups, {\it J. Algebra} {\bf 179} (1996) 751--774.

\bibitem{RZ-14} L. Ribes and   P. A. Zalesskii,  Normalizers in groups and their profinite completions,
{\it Rev. Mat. Iberoam.} {\bf 30} (2014) 165-190.


\bibitem{RZ-10}     L. Ribes,  P. Zalesskii,  {\it Profinite Groups}, (2nd ed.), Springer, Heidelberg, 2010.


\bibitem{Serre}   J-P. Serre,  {\it Trees},   Springer, Heidelberg, 1980.



\bibitem{wise_structure_2012}
D.~T. Wise. The structure of groups with a quasi-convex hierarchy.
 Preprint, http://goo.gl/3ctNvX, April 2012.

\bibitem{W}  D.T. Wise,  The structure of groups with a
quasiconvex hierarchy, {\it Electr. Res. Announcement
 in Math. Sci}, {\bf 16} (2009)
 44--55 .
 
 
 \bibitem{WZ-15} H. Wilton and P. A. Zalesskii, Distinguishing geometries using finite quotients, {\it Gemetry and Topology} (to appear).


\bibitem[Zal89]{Z-89}
P.~A. Zalesskii.
Geometric characterization of free constructions of profinite groups.
 {\em Siberian Math. J.}, {\bf 30} (2) (1989) :227--235.


\bibitem{ZM}  P.A. Zalesskii and    O.V.   Melnikov,  Subgroups of
profinite groups acting on trees, {\it Mat. Sb.} {\bf 135} (1988)
419-439 (English translation: {\it Math USSR Sb.}  {\bf 63}
(1989) 405--424).


\end{thebibliography}
\end{document}